\documentclass[reqno]{amsart}

\usepackage{amsfonts}
\usepackage{amsmath,amssymb,amscd,graphicx,epsfig,psfrag,textcomp,empheq,yfonts,mathrsfs,bbm,bm}
\everymath{\displaystyle}

\newtheorem{theorem}{Theorem}[section]
\newtheorem{lemma}[theorem]{Lemma}
\newtheorem{corollary}[theorem]{Corollary}

\newtheorem{proposition}[theorem]{Proposition}
\newtheorem{prop-def}[theorem]{Proposition-Definition}
\newtheorem{lem-defi}[theorem]{Lemma-Definition}
\newtheorem{definition}[theorem]{Definition}

\newtheorem{prop-defi}[theorem]{Proposition-Definition}




\newcommand{\Cc}{{\mathcal C}}
\newcommand{\Dd}{{\mathcal D}}

\newcommand{\Ff}{{\mathcal F}}

\newcommand{\Ll}{{\mathcal L}}
\newcommand{\Mm}{{\mathcal M}}

\newcommand{\Pp}{{\mathcal P}}
\newcommand{\Qq}{{\mathcal Q}}

\newcommand{\Ww}{{\mathcal W}}

\newcommand{\CC}{{\bf C}}

\newcommand{\SN}{{\bf S}}
\newcommand{\QQ}{{\bf Q}}
\newcommand{\DD}{{\bf D}}

\newcommand{\Hhh}{{\bf h}}

\newcommand{\NN}{{\bf N}}
\newcommand{\PP}{{\mathbf P}}
\newcommand{\RR}{{\bf R}}

\newcommand{\Sb}{{\bf S}}

\newcommand{\MB}{{\mathbf M}}

\newcommand{\rr}{{\mathbf{r_1}}}
\newcommand{\rl}{{\mathbf{r}}}
\newcommand{\Tr}{{\mathbf{\tilde r_1}}}
\newcommand{\rrr}{{\mathbf{r_2}}}
\newcommand{\uu}{{\mathbf{u_1}}}
\newcommand{\uuu}{{\mathbf{u_2}}}

\newcommand{\vv}{{\mathbf{v}}}
\newcommand{\x}{{\mathbf{x}}}
\newcommand{\y}{{\mathbf{y}}}
\newcommand{\X}{{\mathbf{X}}}
\newcommand{\Y}{{\mathbf{Y}}}




\DeclareMathOperator{\ep}{\varepsilon}

\newlength{\upshgt}
\settoheight{\upshgt}{$\Upsilon$}
\DeclareMathOperator{\Dosilon}{\raisebox{\upshgt}{\scalebox{1}[-1]{$\Upsilon$}}}


\newcommand{\aG}{{\mathfrak a}}





\newcommand{\BB}{{\mathbf{B}}}

\numberwithin{equation}{section}



\graphicspath{{Figures/}}  


\begin{document}

\title{Characterization of The Generic Unfolding of a Weak Focus}

\pagestyle{headings}
\noindent 
\author{  by\\ W. Arriagada-Silva \ \,\\ \,\\ \,\\ \, \\ \,\\ \, \\ \,\\ \,  }

\begin{abstract}
In this paper we give a geometric description of the foliation of a generic real analytic family unfolding a real analytic vector field with a weak focus at the origin, and show that two such families are orbitally analytically equivalent if and only if the families of diffeomorphisms unfolding the complexified Poincar\'e map of the singularities are conjugate.    Moreover, by shifting the leaves of the formal normal form in the blow-up (quasiconformal surgery) by means of a fibered transformation along a convenient complex cross-section, one constructs an abstract manifold of complex dimension 2 equipped with an elliptic holomorphic foliation whose monodromy map coincides with a given family of admissible diffeomorphisms.
\end{abstract}

\noindent 
\bibliographystyle{plain}

\address{{\it W. Arriagada:}  D\'epartement de math\'ematiques et de statistique,
Facult\'e des arts et des sciences -Secteur des sciences, Universit\'e de Montr\'eal\\
succ. Centre-ville\\
Montr\'eal, Qc\\
H3C 3J7.}
\email{arriaga@DMS.Umontreal.CA}


\date{\today}

\maketitle
\markboth{ W. Arriagada Silva}{Characterization of the generic unfolding of a weak focus}

\section{Introduction.}\label{intro}
A one-parameter family of real analytic planar systems unfolding a weak focus is an elliptic real analytic one-parameter $\ep\in\RR$ dependent family linearly equivalent to a family of planar differential equations
\begin{equation}
\left.
\begin{array}{lll}
\dot x &=& \alpha(\ep)x -\beta(\ep)y + \sum_{j+k\geq 2} b_{jk}(\ep)x^jy^k\\
\dot y &=& \beta(\ep)x + \alpha(\ep)y + \sum_{j+k\geq 2} c_{jk}(\ep)x^jy^k,
\end{array}\label{weakfocus00}
\right.
\end{equation}
for real time, and with $\alpha(0)=0$ and $\beta(0)\neq 0.$   After rescaling the time $t\mapsto\beta(\ep)t$ we can suppose $\beta(\ep)\equiv 1.$    The family \eqref{weakfocus00} is called ``generic'' if $\alpha'(0)\neq 0.$    The genericity allows to take $\alpha$ as the new parameter, so that the eigenvalues become $\ep+i$ and $\ep-i,$ respectively.\\

When the order is one, a weak focus of a real analytic vector field corresponds to the coalescence of a focus with a limit cycle, and the generic family \eqref{weakfocus00} is then a family with a generic Hopf bifurcation, whose foliation is described by the unfolding of the Poincar\'e map or monodromy $\Pp_{\ep}:(\RR^+,0)\to (\RR^+,0)$ of the system.    It is well known that the germ of the Poincar\'e return map or monodromy  is well defined and analytic, and can be extended to an analytic diffeomorphism 
\begin{equation}
\begin{array}{lll}
\Pp_{\ep}:(\RR,0)\to (\RR,0).
\end{array}\label{usupm}
\end{equation}
A question that arises naturally is whether the germ of the monodromy map defines the analytic equivalence class of the real foliation.    The natural way to answer this question is via complexification (cf. \cite{bcl}).    The right hand side of the complexified system is now defined by an analytic family of vector fields
\begin{equation}
\begin{array}{lll}
v_{\ep}(x,y)= P(x,y)\frac{\partial}{\partial x} + Q_{\ep}(x,y)\frac{\partial}{\partial y}
\end{array}\label{weakfocus0}
\end{equation}
that satisfies
\begin{equation}
\begin{array}{lll}
P_{\ep}(x,y) = \overline{Q_{\overline{\ep}}(\overline y,\overline x)},
\end{array}\label{realchar}
\end{equation}
where $x\mapsto \overline x$ is the complex conjugation.   The time is complexified as well and the domain of the parameter is now a standard open complex disk noted $V\in\CC.$    After complexification, the real plane can be written in a rather simple way: it corresponds to the surface $\{x=\overline y\}.$    The Poincar\'e map of the complexified system (parametrized with $x$-coordinate) is defined as the second iterate of the holonomy $\Qq_{\ep}$ along the loop $\RR P^1$ (the equator of the exceptional divisor) of the foliation after standard blow-up (cf. \cite{YY}), where the standard affine coordinates on the projective line $\CC P^1$ are given by formulas with real coefficients, hence defining correctly the real projective equator $\RR P^1\subset\CC P^1,$ see Figure \ref{monoPoincare}.   Blowing down the foliation, the Poincar\'e map is defined on the $1$-dimensional complex cross-section $\{x=y\},$ and the usual real germ \eqref{usupm} of the planar system is defined on $\{x=y\}\cap\{x=\overline y\}.$\\

\noindent{\emph{Notation.}}    The cross section $\{x=y\}$ is noted $\Sigma$ and is parametrized with the complex coordinate $x.$\\

The complex description of the monodromy immediately allows to prove its analyticity, even at the origin.    The monodromy is then a real holomorphic germ of resonant diffeomorphism with a fixed point of multiplicity 3 at the origin, which corresponds in the limit $\ep=0$ to the coalescence of a fixed point with a 2-periodic orbit: the fixed point and periodic orbit bifurcate in a generic unfolding.
\begin{figure}[!h]
\begin{center} 
{\psfrag{=}{\huge{$\Sigma$}}
\psfrag{a}{\huge{$w^*$}}
\psfrag{q}{\huge{$\Qq(w^*)$}}
\psfrag{e}{\huge{$\RR P^1$}}
\psfrag{c}{\huge{$\CC P^1$}}
\psfrag{z}{\huge{$ $}}
\psfrag{w}{\huge{$ $}}
\psfrag{x}{\huge{$x^*$}}
\psfrag{p}{\huge{$\Pp(x^*)$}}
\psfrag{qq}{\huge{$\Qq(x^*)$}}
\psfrag{o}{\huge{$0$}}
\psfrag{R}{\huge{$\RR$}}
\scalebox{0.40}{\includegraphics{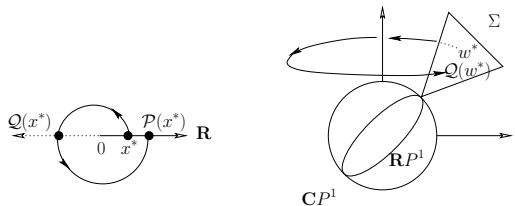}}
}
\end{center}
\caption{\label{monoPoincare} \small{The complexification of the real line and its blow-up.}}
\end{figure}

\noindent{\emph{The equivalence problem.}}    It is known that the problem of orbital equivalence for germs of analytic vector fields with a resonant saddle point is reduced to the conjugacy problem for germs of diffeomorphisms (the holonomy map) with a fixed point at the origin and multiplier on the unit circle (cf. \cite{MM}).    In the non-resonant case, the statement holds as well, as was shown by R. P\'erez-Marco and J.-C. Yoccoz (cf. \cite{peyo}).    Furthermore, this result has been extended to generic analytic families unfolding a resonant saddle point (cf. \cite{cri-rou}).
\begin{definition}
An analytic orbital equivalence (resp. conjugacy) between two analytic germs of families unfolding germs of analytic vector fields (resp. diffeomorphisms) is said to be ``real'', when it leaves invariant the real plane (resp. the real line) for real values of the parameter.
\end{definition}
\noindent In this paper, we show that the equivalence problem for \eqref{weakfocus0} can be reduced to the conjugacy problem for the associated family unfolding the complexified Poincar\'e map, respecting the underlying real foliation.   More precisely,
\begin{theorem}\label{mmexten-unfolding}
Two germs of generic families of real analytic vector fields \eqref{weakfocus0} are analytically orbitally equivalent by a real change of coordinates, if and only if the families unfolding their Poincar\'e maps are analytically conjugate by a real conjugacy.
\end{theorem}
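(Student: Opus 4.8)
The plan is to prove the two implications separately, using the blow-up picture of Figure~\ref{monoPoincare} to transfer back and forth between the foliation of \eqref{weakfocus0} and its monodromy along $\RR P^1$.

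\emph{From orbital equivalence to conjugacy.} Suppose $H$ is a real analytic orbital equivalence between two families $v_\ep$ and $\tilde v_\ep$; since genericity lets us take $\alpha$ as the parameter, after the induced parameter change we may assume $H$ carries the foliation of $v_\ep$ to that of $\tilde v_\ep$ for the \emph{same} $\ep$. First I would lift $H$ to the blow-ups: because $H$ fixes the origin it induces a biholomorphism $\widehat H$ of neighborhoods of the exceptional divisors, and being real it sends $\RR P^1$ to $\RR P^1$ and the real plane $\{x=\overline y\}$ to itself. The monodromy $\Qq_\ep$ is the holonomy of the foliation along $\RR P^1$ computed on $\Sigma=\{x=y\}$, and the Poincar\'e map is $\Pp_\ep=\Qq_\ep^{\circ 2}$. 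Since holonomy is a foliated invariant, $\widehat H$ restricted to $\Sigma$ conjugates $\Qq_\ep$ to $\widetilde\Qq_\ep$, hence $\Pp_\ep$ to $\widetilde\Pp_\ep$; restriction of $\widehat H$ to $\Sigma\cap\{x=\overline y\}$ shows this conjugacy is real. One must check the base point on $\Sigma$ can be chosen compatibly and that the conjugacy depends analytically on $\ep$ — this follows from analyticity of $H$ in all variables.

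\emph{From conjugacy to orbital equivalence.} This is the substantive direction and mirrors the strategy of \cite{MM} and \cite{cri-rou} for resonant saddles, adapted to the weak-focus setting. Given a real analytic family $h_\ep$ conjugating $\Pp_\ep$ to $\widetilde\Pp_\ep$, I would first arrange that $h_\ep$ is the square of a real conjugacy between the holonomies $\Qq_\ep$ and $\widetilde\Qq_\ep$; this is where the reality condition \eqref{realchar} and the multiplicity-$3$ structure of the fixed point at the origin enter, the point being that the $\ZZ/2$-symmetry of the blow-up picture forces a compatible square root. Then I would build the orbital equivalence leaf by leaf: off the exceptional divisor each leaf of the blown-up foliation meets a transversal on which the return map is (a power of) the holonomy, and the conjugacy of holonomies lets one define a holomorphic map between the two blown-up total spaces along the leaves, using the loop $\RR P^1$ to propagate the identification. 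One extends this fibered map across $\RR P^1$ and across the two singular points of the foliation on $\CC P^1$ (the points $x^*$, and the divisor intersections), invoking a removable-singularity / Riemann-extension argument plus the sectorial normal-form theory for the family near each singularity; finally one blows down to obtain $H$ on a neighborhood of the origin in $\CC^2$, checks it is real using that every ingredient respected $\{x=\overline y\}$, and checks analyticity in $\ep$ including at $\ep=0$ where the fixed point and the $2$-periodic orbit coalesce.

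\emph{Main obstacle.} The hard part will be the passage from a conjugacy of Poincar\'e maps to a conjugacy of holonomies and then the extension of the fibered leaf-to-leaf map across the exceptional divisor and through the singular points uniformly in $\ep$: one needs the right normalizing charts for the unfolding (the ``formal normal form in the blow-up'' alluded to in the abstract), control of the Stokes/transition data of the family near the parabolic-type point that bifurcates, and an argument that the extension is bounded so that Riemann's theorem applies in the family. Compactness of $\RR P^1$ and the holonomy representation being generated by a single loop keep the combinatorics manageable, but the analytic estimates near the coalescing singularities at $\ep=0$ are the crux; I expect to quote the sectoral normalization and realization results developed earlier in the paper (the quasiconformal surgery construction) at exactly this point.
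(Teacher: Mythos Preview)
Your skeleton is the paper's skeleton: blow up, build the equivalence fibre by fibre via path-lifting along leaves starting from the conjugacy on $\Sigma$, extend across the two singular points on the divisor by Riemann's removable-singularity theorem, blow down, and fill in the origin by Hartogs. The forward direction is also essentially right, with one wrinkle: $\widehat H(\Sigma)$ is in general \emph{not} $\Sigma$, so ``restriction to $\Sigma$'' is not literally a conjugacy; the paper fixes this by composing with an analytic transition map $\pi$ between $\Sigma$ and $\Psi_\ep(\Sigma)$ obtained from transversality and the implicit function theorem (and checks $\pi$ is real).

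Where you diverge from the paper is in your ``main obstacle'' paragraph. You expect the converse direction to require sectoral normalizing charts, control of Stokes/transition data of the unfolding, and the quasiconformal-surgery machinery of the abstract. The paper uses none of this for Theorem~\ref{mmexten-unfolding}. The entire analytic control needed to propagate the conjugacy off $\Sigma$ is the elementary growth estimate of Lemma~\ref{scylsol} (a Gronwall-type bound on cylindrical solutions of \eqref{diffeq-u_1(theta)} over $|X|=1$) together with its radial analogue \eqref{voodoopre}; these guarantee that lifted paths stay in the disk where $\Hhh_\ep$ is defined, so the fibered map is bounded and Riemann applies in each chart. The realization/surgery apparatus you plan to invoke is developed \emph{afterwards} (Sections~3--4) for the separate Theorem~\ref{desusa} and plays no role here. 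In short: the extension through the singular points on $\CC P^1$ and uniformly in $\ep$ is not the crux; it is handled by boundedness plus Riemann, and the bounds come from a half-page ODE estimate, not from sectorial normal forms.

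One point you flag that the paper does \emph{not} discuss explicitly: passing from a conjugacy of Poincar\'e maps $\Pp_\ep=\Qq_\ep^{\circ 2}$ to one of semi-monodromies $\Qq_\ep$. The paper simply begins the path-lifting from $\Hhh_\ep$ on $\Sigma$ and asserts the resulting fibered map is a well-defined holomorphic diffeomorphism near $\Sb^1\times\DD_r$, without isolating the $\ZZ/2$ compatibility as a separate step. So your instinct that something must be said here is reasonable, but do not build heavy machinery around it; whatever argument you give should be short and should not involve Stokes data.
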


\noindent{\emph{The realization problem.}} A second related problem consists in recovering the germ of the analytic foliation when the Poincar\'e map has been prescribed.    This is the problem of ``realization''.    
We give an answer to this problem by means of the desingularization technique and quasiconformal surgery, as suggested by Y. Ilyashenko (cf. \cite{Yl}): for every $\ep\in V,$ one constructs, with the help of an adequate partition of the unity depending only on the argument of the coordinate induced in the separatrix (the exceptional divisor) by the desingularization process, a fibered $C^{\infty}$ transformation or ``sealing map'' defined on a semi-disk.    By shifting the leaves of the normal form with the help of the sealing map, one obtains a $C^{\infty}$ foliation over the product $\CC^*\times\DD_{r},$ and an integrable almost complex structure, making the foliation actually holomorphic.   The almost complex structure extends smoothly along the vertical axis, because the sealing is, by definition, infinitely tangent to the identity.    It remains integrable after the extension.   The Newlander-Nirenberg Theorem yields a $C^{\infty}$ real system of coordinates (depending analytically on $\ep)$ that straightens the almost complex structure, and therefore, the $C^{\infty}$ foliation in a holomorphic foliation that extends by Riemann along the vertical axis.    The blow down of such a foliation is the required generic elliptic family.\\

In this second part we deal with formal normal forms.    Normal form theory provides an algorithmic way to decide
whether two germs of planar vector fields are equivalent under a $C^{N}$-change of coordinates (cf. \cite{crnf}), in which case, the normal forms are polynomial.   However, in the analytic case, the
formal change of coordinates to normal form generically diverges (cf. \cite{ynf}). An
explanation of this is found by considering unfoldings of the vector fields
and explaining the divergence in the limit process.    This is a particular manifestation of the so-called Stokes Phenomenon (cf. \cite{Yl}).    The spirit of the general answer is the following (cf. \cite{crnf}).   The dynamics of the
original system is extraordinarily rich to be encoded in the simple dynamics
of the normal form which depends of at most one parameter.   Hence the divergence
of the normalizing series.

\section{Proof of Theorem \ref{mmexten-unfolding}.}
The proof uses basically the classical fact that the holonomy characterizes the differential equation (cf. \cite{MM} and \cite{MR}), plus an additional ingredient: both the equivalence between vector fields and the conjugacy between Poincar\'e maps, must respect the real foliation.\\

By definition, if two families \eqref{weakfocus0} are orbitally equivalent by an analytic change of coordinates $\Psi_{\ep}$ (depending analytically on the parameter), it is always possible to reparametrize the families and suppose that they have the same parameter.   Thus, one direction is obvious: if two families of vector fields are equivalent by real change of coordinates, then the equivalence induces a real analytic return map on the image $\Psi_{\ep}(\Sigma),$ for each value of $\ep$ over a small neighborhood of the origin.    Because the equivalence is real, the image of the real line under the equivalence is a real analytic curve $\Cc\subset\Psi_{\ep}(\Sigma)$ different, in general, to $\RR,$ see Figure \ref{reconpoin}.    Standard transversality arguments and the Implicit Function Theorem show that there exists an analytic local transition map $\pi$ between $\Sigma$ and $\Psi_{\ep}(\Sigma)$ (cf. \cite{wacr}).     By unicity, any real local trajectory passing through a real point in $\Sigma$ intersects the image $\Psi_{\ep}(\Sigma)$ in a real point.   Thus, the transition is real and it sends the curve $\RR$ into $\Cc,$ and the composition $\pi^{-1}\circ\Psi_{\ep}$ provides a real conjugacy between Poincar\'e maps $\Sigma\to\Sigma.$\\
\begin{figure}[!h]
\begin{center} 
{\psfrag{a}{\Huge{$\Sigma$}}
\psfrag{b}{\Huge{$\RR^2$}}
\psfrag{psi}{\Huge{$\Psi_{\ep}$}}
\psfrag{pi}{\Huge{$\pi$}}
\psfrag{ee}{\Huge{$\RR$}}
\psfrag{gg}{\Huge{$\Cc$}}
\psfrag{ff}{\Huge{$\RR$}}
\psfrag{c}{\Huge{$\Sigma$}}
\psfrag{d}{\Huge{$\Psi_{\ep}(\Sigma)$}}
\scalebox{0.32}{\includegraphics{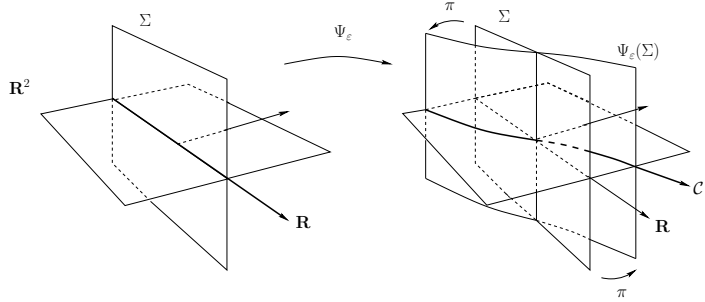}}
}
\end{center}
\caption{\label{reconpoin} \small{The real line and its image by the equivalence $\Psi_{\ep}.$}}
\end{figure}

Let us show the converse.    The conjugacy between the Poincar\'e maps provides a reparametrization, so we can suppose that the parameter is the same for the two families of diffeomorphisms and is henceforth noted $\ep.$    We will suppose that the real conjugacy $\Hhh_{\ep}(x)=\Hhh(\ep,x)$ depends on the $x$-variable and is defined on $\DD_{\rho}\subset\Sigma,$ for every $\ep\in V,$ where $\DD_{\rho}\subset\Sigma$ is the standard open disk of the complex plane, of small radius ${\rho}>0.$     

A theorem on the existence of invariant analytic manifolds (cf. \cite{YY},\cite{MM}) ensures that it suffices to show the theorem for Pfaffian $1$-forms $$\omega_{\ep}, \widehat{\omega}_{\ep} = (\ep+i)xdw - (\ep-i) y(1+xy(...))dx$$ before desingularization.   So if the blow-up space is equipped with coordinates $(X,y)$ and $(x,Y),$ where the standard monoidal map blows down as
\begin{equation}
\begin{array}{lll}
c_1:(X,y)\mapsto(Xy,y),\\
c_2:(x,Y)\mapsto(x,xY)
\end{array}\label{bdown}
\end{equation}
respectively in each direction, the pullback of $\omega_{\ep}$ is defined by $$\omega_1=Xdy-\lambda(\ep) y(1+A_{\ep}(X,y))dX$$ in $(X,y)$ variables, and by $$\omega_2=Ydx-\lambda'(\ep) x(1+A'_{\ep}(x,Y))dY$$ in $(x,Y)$ coordinates, where $A_{\ep}(X,y)=O(Xy)$ and $A'_{\ep}(x,Y)=O(xY)$ depend analytically on the parameter and are holomorphic on a neighborhood $\CC^*\times\DD_{s}$ of the exceptional divisor, for each fixed value of $\ep.$  The numbers $\lambda(\ep)=(\ep-i) / 2i$ and $\lambda'(\ep)=-(\ep+i) / 2i$ are the ratios of eigenvalues of the singular points $(X,y)=(0,0)$ and $(x,Y)=(0,0),$ respectively.    In addition, the coordinates can always be scaled before blow-up, to ensure:
\begin{equation}
\begin{array}{lll}
|A_{\ep}(X,y)|,|A'_{\ep}(x,Y)|< 1/2
\end{array}\label{Func-A}
\end{equation}
in $\CC^*\times\DD_s.$   Notice that in complex coordinates, the section $\Sigma$ is parametrized as $\{X=1\}$ in the $(X,y)$ chart, and as $\{Y=1\}$ in the $(x,Y)$ chart.   Bounded equivalences $\widehat{\Psi}_{\ep}^{c_1},\widehat{\Psi}_{\ep}^{c_2}$ are constructed in $(X,y)$ and $(x,Y)$ variables, in such a way that they are analytic continuations of each other over a neighborhood of the exceptional divisor.

\subsection{The equivalence in the $(X,y)$ chart.}
Take a point $y^*\in\DD_{\rho}.$   A former equivalence $\widehat{\Psi}_{\ep}^{c_1}$ is defined on $\Sigma\times\DD_{\rho}$ by $$\widehat{\Psi}_{\ep}^{c_1}:(1,y^*)\mapsto (1,\Hhh_{\ep}(y^*)).$$   This change of coordinates is extended along a subset of $\Sb^1\times\CC$ in the following way.    Notice that the restriction of the form $\omega_1$ to the cylinder $\RR P^1\times\RR^2$ (noted $\{|X|=1\})$ is non-singular and holomorphic, thus it defines a local holomorphic foliation $\Ff_{\omega_1}$ there.    Consider (cylindrical) solutions to $\omega_1=0$ (the first coordinate is to be parametrized by $X=e^{i\theta},$ $\theta\in[0,2\pi]).$
\begin{lemma}\label{scylsol}
Any (cylindrical) solution $\uu$ to
\begin{equation}
\begin{array}{lll}
\uu'=\lambda(\ep)\uu(1+A(e^{i\theta},\uu)),\quad\theta\in[0,2\pi]
\end{array}\label{diffeq-u_1(theta)}
\end{equation}
satisfies $|\uu(0)|e^{-\theta \left\{|\ep|+\frac{1}{4}\right\}}<|\uu(\theta)|<|\uu(0)|e^{\theta \left\{|\ep|+\frac{1}{4}\right\}},$ for any $\theta\in(0,2\pi].$
\end{lemma}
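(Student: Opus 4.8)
The plan is to estimate the growth of $|\uu(\theta)|$ by controlling its logarithmic derivative. First I would observe that along any cylindrical solution $\uu(\theta)$ to \eqref{diffeq-u_1(theta)}, where the solution does not pass through the origin (which is guaranteed by uniqueness, since $\uu\equiv 0$ is itself a solution), the quantity $\log|\uu(\theta)|$ is differentiable and
\begin{equation*}
\frac{d}{d\theta}\log|\uu(\theta)| = \mathrm{Re}\left(\frac{\uu'(\theta)}{\uu(\theta)}\right) = \mathrm{Re}\bigl(\lambda(\ep)(1+A(e^{i\theta},\uu(\theta)))\bigr).
\end{equation*}
So the entire lemma reduces to bounding $\bigl|\mathrm{Re}\bigl(\lambda(\ep)(1+A(e^{i\theta},\uu))\bigr)\bigr|$ by $|\ep|+\tfrac14$.

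Next I would bound the two contributions separately. Recall $\lambda(\ep)=(\ep-i)/2i = -\tfrac12 - \tfrac{\ep}{2i} = -\tfrac12 + \tfrac{i\ep}{2}$ for real-looking $\ep$; in any case $\mathrm{Re}\,\lambda(\ep) = -\tfrac12$ is a fixed real constant and $|\lambda(\ep)| \le \tfrac12|\ep| + \tfrac12$. Then $|\mathrm{Re}(\lambda(\ep))| = \tfrac12$, while $|\mathrm{Re}(\lambda(\ep)A(e^{i\theta},\uu))| \le |\lambda(\ep)|\cdot|A| < (\tfrac12|\ep|+\tfrac12)\cdot\tfrac12 = \tfrac14|\ep| + \tfrac14$ using the bound \eqref{Func-A}. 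Adding, $\bigl|\tfrac{d}{d\theta}\log|\uu(\theta)|\bigr| < \tfrac12 + \tfrac14|\ep| + \tfrac14 = \tfrac34 + \tfrac14|\ep|$. I would then integrate from $0$ to $\theta$ to get $\bigl|\log|\uu(\theta)| - \log|\uu(0)|\bigr| < (\tfrac34 + \tfrac14|\ep|)\theta$, i.e. $|\uu(0)|e^{-(\frac34 + \frac14|\ep|)\theta} < |\uu(\theta)| < |\uu(0)|e^{(\frac34+\frac14|\ep|)\theta}$, and exponentiate.

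The main obstacle — or rather the main discrepancy to resolve — is that this naive computation yields the exponent $\tfrac14|\ep| + \tfrac34$ rather than the sharper $|\ep| + \tfrac14$ claimed in the statement. One must check which of the two is actually being asserted and why; presumably the intended regime is $|\ep|$ small and the point is that \emph{some} uniform exponential bound of this shape holds, so I would either (i) note that for $|\ep|$ in the relevant parameter disk $V$ one has $\tfrac34 + \tfrac14|\ep| \le |\ep| + \tfrac14$ precisely when $|\ep| \ge \tfrac23$ — which is the wrong direction — and hence instead (ii) reexamine whether the constant $1/2$ in \eqref{Func-A} should be read together with an additional smallness reduction of $\rho$ so that $|\uu|$ stays in $\DD_s$ and $|A|$ can be taken as small as desired, in which case replacing $1/2$ by a sufficiently small $\delta$ gives exponent $\tfrac12 + |\lambda(\ep)|\delta$, still short of $|\ep|+\tfrac14$. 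The cleanest fix, and the one I expect the author intends, is to absorb the fixed $\tfrac12$ coming from $\mathrm{Re}\,\lambda$ differently: write $\lambda(\ep) = \lambda(0) + (\lambda(\ep)-\lambda(0))$ with $|\lambda(\ep)-\lambda(0)| \le \tfrac12|\ep|$, separate the \emph{real} drift $\mathrm{Re}(\lambda(0)(1+A))$ from the $\ep$-perturbation, and track the drift with a one-sided (Gr\"onwall-type) inequality rather than a two-sided absolute value; in a neighborhood where $|A|<1/2$ the quantity $1+A$ has positive real part, so $\mathrm{Re}(\lambda(0)(1+A)) < 0$ has absolute value $< \tfrac12 \cdot \tfrac32 = \tfrac34$ again — so the honest bound is $\tfrac34 + \tfrac14|\ep|$, and I would simply present the lemma with whichever explicit constant the subsequent arguments actually require, flagging that $|\ep|+\tfrac14$ is a convenient (if not tight) majorant valid once $\rho$ and the parameter disk are taken small enough that $|A|$ is correspondingly small. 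Either way the proof is a two-line estimate on a logarithmic derivative followed by integration; no deeper machinery is needed.
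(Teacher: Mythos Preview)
Your overall strategy---bound the logarithmic derivative $\frac{d}{d\theta}\log|\uu|$ and integrate---is exactly the paper's. The discrepancy you detect is not a matter of constants to be tuned; it comes from a misreading of the differential equation.

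The prime in \eqref{diffeq-u_1(theta)} is \emph{not} $d/d\theta$. The cylindrical solution solves $\omega_1=0$, i.e.\ $X\,d\uu=\lambda(\ep)\uu(1+A)\,dX$, so $d\ln\uu=\lambda(\ep)(1+A)\,dX/X$. With $X=e^{i\theta}$ one has $dX/X=i\,d\theta$, hence
\[
\frac{d}{d\theta}\ln\uu \;=\; i\lambda(\ep)\,(1+A)\;=\;\tfrac{\ep-i}{2}\,(1+A),
\]
which is precisely what the paper writes (``the time is parametrized by $t=i\theta$''). The extra factor $i$ is the whole story: the leading real part is now $\mathrm{Re}\bigl(\tfrac{\ep-i}{2}\bigr)=\tfrac{\ep_1}{2}$, of order $|\ep|$, rather than your $\mathrm{Re}\,\lambda(\ep)\approx -\tfrac12$. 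That spurious $-\tfrac12$ is exactly the term you spend the second half of the proposal trying to absorb, and it simply is not there.

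With the correct expression, writing $\ep=\ep_1+i\ep_2$,
\[
\mathrm{Re}\Bigl(\tfrac{\ep-i}{2}(1+A)\Bigr)=\tfrac12\bigl(\ep_1(1+\mathrm{Re}\,A)+(1-\ep_2)\,\mathrm{Im}\,A\bigr),
\]
and using $|\mathrm{Re}\,A|,|\mathrm{Im}\,A|<\tfrac12$ from \eqref{Func-A} together with $|\ep_1|,|\ep_2|\le|\ep|$ gives
\[
\Bigl|\tfrac{d}{d\theta}\log|\uu|\Bigr|<\tfrac12\bigl(\tfrac32|\ep_1|+\tfrac12(1+|\ep_2|)\bigr)\le |\ep|+\tfrac14,
\]
and integration from $0$ to $\theta$ yields the stated inequality. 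No shrinking of $|A|$, no Gr\"onwall refinement, and no adjustment of the constant are needed.
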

\begin{proof}
The parameter is written as $\ep=\ep_1+i\ep_2,$ with $\ep_1,\ep_2\in\RR.$    As we consider solutions in $|X|=1,$ the time is parametrized by $t=i\theta,$ and then \eqref{diffeq-u_1(theta)} implies $$d\ln\uu=\frac{1}{2}(\ep-i)(1+A_{\ep}(e^{i\theta},\uu))d\theta.$$    Thus, after taking real parts and using the hypothesis \eqref{Func-A} we get, for $\theta\neq 0:$
\begin{equation*}
\begin{array}{lll}
\left|\ln\left|\frac{\uu}{\uu(0)}\right|\right| &\leq& \frac{1}{2}\int_{0}^{\theta} \{|\ep_1|(1+|Re(A_{\ep})|)+|Im(A_{\ep})|(1+|\ep_2|)\} d\theta\\
&<& \frac{1}{2}\int_{0}^{\theta} \{2|\ep| + 1 /2\} d\theta = \theta \left\{|\ep|+ 1/4\right\},\\
\end{array}
\end{equation*}
and the conclusion follows.
\end{proof}
Put $r=\rho e^{-\pi}.$    We denote by $\SN_{r}$ the set of (cylindrical) solutions $\uu$ to \eqref{diffeq-u_1(theta)} for which there exists $\theta_0\in[0,2\pi)$ such that $\uu(\theta_0)\in\DD_{r}.$ 
\begin{corollary}\label{solutcyl}
If $\uu\in\SN_r,$ then $\uu(0)\in\DD_{\rho},$ provided $|\ep|< 1 /4.$
\end{corollary}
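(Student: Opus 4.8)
The plan is to read off the conclusion directly from Lemma \ref{scylsol}. Let $\uu\in\SN_r$, so that by definition there is some $\theta_0\in[0,2\pi)$ with $\uu(\theta_0)\in\DD_r$, i.e. $|\uu(\theta_0)|<r=\rho e^{-\pi}$. If $\theta_0=0$ there is nothing to prove, since then $\uu(0)=\uu(\theta_0)\in\DD_r\subset\DD_\rho$; hence I may assume $\theta_0\in(0,2\pi)$ and apply the lemma at the value $\theta=\theta_0$.

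First, the left-hand estimate of Lemma \ref{scylsol} at $\theta=\theta_0$ reads $|\uu(0)|\,e^{-\theta_0\left\{|\ep|+\frac14\right\}}<|\uu(\theta_0)|$, hence
\[
|\uu(0)| < |\uu(\theta_0)|\,e^{\theta_0\left\{|\ep|+\frac14\right\}} < \rho\, e^{-\pi}\,e^{\theta_0\left\{|\ep|+\frac14\right\}}.
\]
Next I would invoke the hypothesis $|\ep|<\frac14$, which gives $|\ep|+\frac14<\frac12$; combined with $\theta_0<2\pi$ this yields $\theta_0\left\{|\ep|+\frac14\right\}<\pi$, so that $e^{\theta_0\left\{|\ep|+\frac14\right\}}<e^{\pi}$. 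Substituting this bound into the previous display produces $|\uu(0)|<\rho\, e^{-\pi}e^{\pi}=\rho$, i.e. $\uu(0)\in\DD_\rho$, which is precisely the assertion.

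There is essentially no obstacle here: the estimate is designed to close, the choice $r=\rho e^{-\pi}$ being exactly the deficit needed to absorb the worst-case exponential amplification $e^{\pi}$ coming from a growth rate strictly below $\frac12$ over an arc of length strictly below $2\pi$. The only points requiring a little care are the strictness of the inequalities (inherited verbatim from Lemma \ref{scylsol} and from $|\ep|<\frac14$) and the degenerate case $\theta_0=0$, both dispatched above; one should also keep in mind that membership in $\SN_r$ only furnishes the existence of a single such $\theta_0$, which is all that the argument uses.
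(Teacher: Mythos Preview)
Your argument is correct and is exactly the immediate deduction the paper intends: the corollary is stated without proof there, as a direct consequence of the two-sided estimate in Lemma~\ref{scylsol} together with the choice $r=\rho e^{-\pi}$ and the bound $|\ep|<\tfrac14$. Your handling of the endpoint case $\theta_0=0$ and of the strict inequalities is also in order.
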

This is how the equivalence is extended.   Choose a point $(e^{i\theta_0},y_0)\in\Sb^1\times\DD_r.$    By definition, the path $\gamma:(e^{i\theta},0)$ is lifted in the leaf of $\Ff_{\omega_1}$ containing $y_0\in\DD_{r}$ as $(e^{i\theta},\uu(\theta)),$ for a certain $\uu\in\SN_{r}$ and $\uu(\theta_0)=y_0.$     By Corollary \ref{solutcyl}, the point $\widetilde y:=\uu(0)$ belongs to $\DD_{\rho}.$    If $\gamma$ is lifted in the leaf of $\Ff_{\widehat{\omega}_1}$ passing through $\Hhh_{\ep}(\widetilde y)$ as $(e^{i\theta},\uuu(e^{i\theta},\widetilde y)),$ with $\uuu(1,\widetilde y)=\Hhh_{\ep}(\widetilde y),$ then we define the analytic change of variables by:
\begin{equation}
\begin{array}{lll}
\widehat{\Psi}_{\ep}^{c_1}:\Sb^1\times\DD_r\to\Sb^1\times\CC,\\
\widehat{\Psi}_{\ep}^{c_1}:(e^{i\theta_0},\uu(\theta_0))\mapsto(e^{i\theta_0},\uuu(e^{i\theta_0},\widetilde y)).
\end{array}\label{PsiZ}
\end{equation}
The change \eqref{PsiZ} respects the transversal fibration given by $X=const.$ and is clearly the restriction of a (unique) holomorphic diffeomorphism conjugating $\Ff_{\omega_1}$ and $\Ff_{\widehat{\omega}_1}$ in a neighborhood of $\Sb^1\times\DD_r.$    Moreover, it extends analytically to $\DD_1\times\DD_r$ (where $\DD_1$ is the standard unit (closed) disk of the $X$-separatrix) by means of the lifting of radial paths
\begin{equation*}
\begin{array}{lll}
\gamma_{X_1}  :  [0,-\log |X_1|]  \to \CC,\quad s \mapsto \gamma_{X_1}(s)=(X_1 e^{s},0)
\end{array}\label{radpathZ}
\end{equation*}
for $0<|X_1|<1.$    In fact, suppose that this curve lifts in the leaves of $\Ff_{\omega_1}$ as $$\gamma_{X_1,y_1}:s\mapsto (X_1e^{s},\rr(s,y_1)),\quad \rr(0,y_1)=y_1,$$ for a given $y_1$ small.    Then the solution $\rr(\cdot,y_1)$ of $\omega_1=0,$ with parameter $0<|X_1|<1,$ and initial condition $\rr(0,y_1)=y_1$ is defined on $[0,-\log|X_1|].$     Actually, the hypothesis \eqref{Func-A} shows that 
\begin{equation}
\begin{array}{lll}
|\rr|\leq |y_1|e^{s\left\{|\ep|-\frac{1}{4}\right\}}< |y_1|,
\end{array}\label{voodoopre}
\end{equation}
whenever $|\ep|<1/4.$   We will suppose that the inverse path of $\gamma_{X_1}$ lifts in the leaf of $\Ff_{\widehat{\omega}_1}$ through the point $(\frac{X_1}{|X_1|},y^0),$ where $y^0$ is small, as 
\begin{equation*}
\begin{array}{lll} 
\gamma_{X_1,y^0}^{-1}: s \mapsto (X_1 e^{-(s+\log|X_1|)},\Tr(s,y^0)),\quad s\in[0,-\log|X_1|].
\end{array}\label{radpathZ-1}
\end{equation*}
Consider the only cylindrical solution $\mathbf{u}_{\mathbf 1,X_1,y_1}$ to \eqref{diffeq-u_1(theta)} satisfying $\mathbf{u}_{\mathbf 1,X_1,y_1}(\arg X_1)=\rr(-\log|X_1|,y_1)$ and define the coordinate $$\widetilde y(X_1,y_1):=\mathbf{u}_{\mathbf 1,X_1,y_1}(0)\in\Sigma.$$    Then, \eqref{voodoopre} proves that $\mathbf{u}_{\mathbf 1,X_1,y_1}\in\SN_r$ if $y_1$ is taken in $\DD_r.$    In this case, Corollary \ref{solutcyl} ensures that $\widetilde y(X_1,y_1)$ belongs to $\DD_{\rho}.$    The equivalence is then defined by
\begin{equation}
\begin{array}{lll}
\widehat{\Psi}_{\ep}^{c_1} : (X_1,y_1)\mapsto(X_1,\rrr(X_1,y_1)),
\end{array}\label{Equivalence-Zdisk}
\end{equation}
with $\rrr(X_1,y_1)=\Tr(-\log|X_1|,\uuu(e^{i\arg(X_1)},\widetilde y(X_1,y_1)))$ $(\uuu$ given in \eqref{PsiZ}).   As the change of coordinates is bounded, the Riemann's removable singularity Theorem implies the existence of a unique holomorphic extension $\widehat{\Psi}_{\ep}^{c_1}$ to $\DD_1\times\DD_r.$\\    

Finally, the change of coordinates \eqref{Equivalence-Zdisk} extends to a subset $$\Dd_1(r) = \{(X,y)\in\CC \times\CC : |X|\geq 1, |Xy|\leq r \}$$ as follows.    Similar arguments as those used above show that the only tangent curve $\rl(\cdot,y_1)$ to $\omega_1$ verifying $\rl(\log|X_1|,y_1)=y_1,$ for a given $(X_1,y_1)\in\Dd_1(r),$ satisfies $$|\rl(0,y_1)|e^{-s\{|\ep|+1/4\}} < |\rl(s,y_1)|,\quad s\in[0,\log|X_1|],$$ so that the initial condition $\rl(0,y_1)$ of the lifting starting at $(\frac{X_1}{|X_1|},\rl(0,y_1))$ belongs to $\DD_r$ provided $|\ep|\leq 3/4.$    Thus, the leaf containing the point $(X_1,y_1)$ intersects the cylinder $\{|X|=1\}$ in a curve $\uu=\uu(\theta)\in\SN_r,$ with $\uu(\arg X_1)=\rl(0,y_1)\in\DD_r.$    By Corollary \ref{solutcyl}, $\uu(0)\in\DD_{\rho}$ and then 
$\widehat{\Psi}_{\ep}^{c_1}(\frac{X_1}{|X_1|},\rl(0,y_1))$ is well defined, where $\widehat{\Psi}_{\ep}^{c_1}$ is the equivalqnce \eqref{Equivalence-Zdisk}.    In $\Ff_{\widehat{\omega}_1},$ the inverse of $\gamma_{X_1}$ is lifted on the leaf passing through the point $\widehat{\Psi}_{\ep}^{c_1}(\frac{X_1}{|X_1|},\rl(0,y_1)).$    The endpoint of this radial lifting defines $\widehat{\Psi}_{\ep}^{c_1}$ on $\Dd_1(r).$

\subsection{The equivalence in the $(x,Y)$ chart.}
If $\DD_2$ is the standard unit (closed) disk of the $Y$-separatrix and $\Dd_2(r) = \{(x,Y)\in\CC\times\CC : |Y|\geq 1, |xY|\leq r \},$ then, in $(x,Y)$ coordinates the equivalence is defined plainly on $(\DD_2^*\times\DD_r)\cup\Dd_2(r),$ by the formula $$\widehat{\Psi}_{\ep}^{c_2}:=\varphi\circ\widehat{\Psi}_{\ep}^{c_1}\circ\varphi^{\circ -1},$$ where $\varphi: (X,y)\mapsto (x,Y)$ is the transition between complex charts.    
Such equivalence is clearly bounded and the Riemann's Theorem yields a unique holomorphic extension $\widehat{\Psi}_{\ep}^{c_2}:(\DD_2\times\DD_r)\cup\Dd_2(r)\mapsto\CC^2.$

It turns out that the two changes of coordinates thus obtained $\widehat{\Psi}_{\ep}^{c_1},\widehat{\Psi}_{\ep}^{c_2}$ are analytical continuations of each other on $\CC P^1\times\DD_r,$ yielding a well defined and holomorphic global change of coordinates $\widehat{\Psi}_{\ep}$ over the divisor which is, by construction, a local equivalence between $\Ff_{\omega_{\ep}}$ and $\Ff_{\widehat{\omega}_{\ep}}$  around $\Sb^1\times\CC.$    It depends holomorphically on $\ep\in V$ by the analytic dependence on initial conditions of a differential equation.  Let $\Psi_{\ep}$ stand for this diffeomorphism in $(x,y)$ variables.    Since the Riemann sphere $\CC P^1$ retracts to the origin, the equivalence $\Psi_{\ep}$ is defined on $(\DD_{r}\times\DD_{r})\backslash \{(0,0)\}$ and is analytic there, because the monoidal map is an isomorphism away from the exceptional divisor.    By Hartogs Theorem, $\Psi_{\ep}$ can be holomorphically extended until the origin.

Inasmuch as the equivalence $\Psi_{\ep}$ is constructed by lifting paths, and both the holonomy and the conjugacy $\Hhh_{\ep}$ are real (when $\ep\in\RR)$, the change of coordinates $\Psi_{\ep}$ is real as well.

\section{Realization of an admissible family.}\label{rrffweo1}
A first change of coordinates on the complexified family \eqref{weakfocus0}, depending analytically on small values of the parameter, allows to get rid of all cubic terms except for the resonant one (Poincar\'e normal form).    The weak focus is of order one if the real part of the coefficient of the third order resonant monomial is non null.     The sign $s=\pm 1$ of such a coefficient defines two different cases which are not equivalent by real equivalence.    In fact, $s$ is an analytic invariant of the system.   An analytic change of coordinates (cf. \cite{wacr}) brings the Poincar\'e map to the ``prepared'' form
\begin{equation}
\begin{array}{lll}
\Pp_{\ep}(x)= x + x(\ep+s x^2)(2\pi+O(\ep)+O(x)),
\end{array}\label{pforPpp}
\end{equation}
with multiplier $\exp(2i\pi)$ at the origin.

\begin{proposition}\label{formal-class-teo}
A germ of generic real analytic family of differential equations unfolding a germ of real analytic weak focus of order one, is formally orbitally equivalent to:
\begin{equation}
\begin{array}{lll}
\dot x &=& x(i+(\ep\pm u)(1 - A(\ep)u))\\
\dot y &=& y(-i+(\ep\pm u)(1 - A(\ep)u))
\end{array}\label{formclawf}
\end{equation} 
with $u=xy,$ for some family of constants $A(\ep)$ which is real on $\ep\in\RR$ and $A(0)\neq 0.$   The parameter $\ep$ of the formal normal form \eqref{formclawf} is called the ``canonical parameter''.
\end{proposition}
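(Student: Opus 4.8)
The plan is to prove Proposition \ref{formal-class-teo} by reducing the family \eqref{weakfocus0} to the prepared Poincar\'e map \eqref{pforPpp} and then transporting the formal normalization of the one-dimensional diffeomorphism back to a formal orbital normal form of the vector field via the blow-up. First I would recall that, after the analytic change of coordinates \eqref{pforPpp}, the family of Poincar\'e maps has the shape $x\mapsto x + x(\ep+sx^2)(2\pi + O(\ep)+O(x))$, i.e. a generic unfolding of a parabolic germ with triple fixed point and multiplier $e^{2i\pi}$. The classical theory of formal normal forms for such generic unfoldings (the resonant/parabolic case; cf. \cite{cri-rou}, \cite{crnf}) shows that, up to formal conjugacy, such a family is equivalent to the time-one map of the (formal) vector field $\dot x = \dfrac{x(\ep\pm x^2)}{1 - A(\ep)x^2}$ on $\RR$ (equivalently $x(\ep\pm x^2)(1 + A(\ep)x^2 + \cdots)$), with two formal invariants: the sign $s=\pm1$ and the analytic function $A(\ep)$, real for real $\ep$, with $A(0)\neq 0$ (the latter being exactly the order-one condition on the weak focus). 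This is where the genericity hypothesis $\alpha'(0)\neq0$ is used: it lets one take $\ep$ itself (the real part of the eigenvalue) as the \emph{canonical parameter}, since the formal model has no further parameter freedom beyond an affine reparametrization which is pinned down by requiring the model to be in the stated normalized form.

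Next I would set up the correspondence between the 1-dimensional picture and the 2-dimensional one through the standard blow-up used throughout the paper. In the charts \eqref{bdown} the formal vector field on the blown-up space splits into two elementary (resonant-saddle-type) singularities with eigenvalue ratios $\lambda(\ep)=(\ep-i)/2i$ and $\lambda'(\ep)=-(\ep+i)/2i$; the Poincar\'e map on $\Sigma$ is the square of the holonomy $\Qq_\ep$ of the exceptional divisor $\RR P^1$. By the theorem on the formal (and analytic) correspondence between a resonant saddle and its holonomy (cf. \cite{MM}, \cite{YY}) — extended to generic families in \cite{cri-rou} — a formal conjugacy between the holonomy maps of two such families lifts to a formal orbital equivalence of the vector fields on the divisor, and hence, blowing down, of the original families. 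Thus it suffices to exhibit a \emph{single} model vector field whose associated Poincar\'e map is formally conjugate to the normal form of \eqref{pforPpp}, and to check that this model is \eqref{formclawf}. Writing $u = xy$, the candidate \eqref{formclawf} is invariant under the rotation flow $(x,y)\mapsto(e^{it}x,e^{-it}y)$ up to the radial factor, so $u$ satisfies the decoupled equation $\dot u = 2u(\ep\pm u)(1 - A(\ep)u)$; this is precisely (twice) the real 1-dimensional model above under $u \leftrightarrow x^2$, and a direct computation of its holonomy/monodromy along $\RR P^1$ — i.e. the effect of running $t$ from $0$ to $2\pi$ along the angular direction — reproduces the time-$2\pi$ map of $\dot x = x(\ep\pm x^2)(1 - A(\ep)x^2)^{-1}$ on the section, matching \eqref{pforPpp} formally. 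Uniqueness of $s$ and $A(\ep)$ as invariants then forces the stated form.

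I would organize the write-up as: (i) reduce to \eqref{pforPpp} and invoke the formal classification of generic unfoldings of parabolic germs to get the 1-dimensional model with invariants $s$, $A(\ep)$; (ii) verify that $A(0)\neq0$ is equivalent to the weak focus being of order one, using the relation between the coefficient of the cubic resonant monomial in \eqref{pforPpp} and the leading term of $A$; (iii) blow up \eqref{formclawf}, compute its divisor holonomy and hence its Poincar\'e map on $\Sigma$, and check it is formally conjugate to the model from (i); (iv) conclude by the holonomy-determines-the-foliation principle (cf. \cite{MM}, \cite{MR}), exactly as in the proof of Theorem \ref{mmexten-unfolding} but at the formal level, that \eqref{weakfocus0} is formally orbitally equivalent to \eqref{formclawf}; (v) observe that the only remaining freedom is an affine change of $\ep$, normalized away by the chosen form of the model, which defines the canonical parameter and uses $\alpha'(0)\neq0$.

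The main obstacle I expect is step (iii)–(iv): making precise, at the level of \emph{formal} power series in $(x,y)$ and \emph{convergent} (analytic) dependence on $\ep$, that a formal conjugacy of the monodromy maps on $\Sigma$ lifts to a formal orbital equivalence of the blown-up foliations — one must check that the path-lifting construction used in Section 2 for the analytic case has a purely formal analogue that respects the fibration $X=\mathrm{const}$ and the two singular points simultaneously, and that the two chart-wise formal equivalences glue over the divisor. The bookkeeping of the eigenvalue ratios $\lambda(\ep),\lambda'(\ep)$ and the resonant monomial $u=xy$, together with verifying that the reparametrization freedom is \emph{exactly} the affine group and nothing more, is the delicate point; everything else is either a routine normal-form computation or a direct appeal to \cite{cri-rou} and \cite{crnf}.
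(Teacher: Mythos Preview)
Your proposal takes a genuinely different route from the paper. The paper's proof is a direct, elementary formal-normal-form computation carried out on the two-dimensional complexified system itself, with no passage through the Poincar\'e map at all: starting from the orbital form
\[
\dot x = x\Big(i+\ep-\sum_{j\ge1}A_j(\ep)u^j\Big),\qquad
\dot y = y\Big(-i+\ep-\sum_{j\ge1}\overline{A_j(\overline\ep)}\,u^j\Big),
\]
one applies, for each $n\ge1$, the near-identity change $(x,y)\mapsto(x(1+cu^n),y(1+\overline c\,u^n))$; the cohomological equation for killing $A_{n+1}u^{n+1}$ reads $A_1(c+\overline c)-n c(A_1+\overline{A_1})=A_{n+1}$, solvable for all $n>1$ because $\mathrm{Re}(A_1)\neq0$ (the order-one hypothesis), while for $n=1$ only $i\,\mathrm{Im}(A_2)$ can be removed. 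A time rescaling by $(yP-xQ)/2ixy$ then kills all imaginary parts, and one repeats with real $c$. This is a few lines of algebra and uses nothing beyond the Poincar\'e--Dulac mechanism adapted to the real symmetry $P_\ep(x,y)=\overline{Q_{\overline\ep}(\overline y,\overline x)}$.

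By contrast, your plan normalizes the one-dimensional monodromy first and then invokes a \emph{formal} version of the ``holonomy determines the foliation'' principle to lift the conjugacy back to the vector field. This is conceptually appealing, but the lifting step you flag as the main obstacle is at least as hard as the proposition itself: the path-lifting construction of Section~2 is analytic and does not transpose verbatim to formal power series, so you would need to cite or prove a formal Mattei--Moussu/Martinet--Ramis correspondence for families. The paper avoids this entirely. One further slip: the order-one condition is \emph{not} $A(0)\neq0$ but $\mathrm{Re}(A_1)\neq0$, which becomes the sign $s=\pm1$ in front of $u$; the coefficient $A(\ep)$ in \eqref{formclawf} is the residual quadratic-in-$u$ formal invariant, not the Lyapunov coefficient.
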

\begin{proof}
Consider the case $s = + 1.$    By a formal change of coordinates we bring the system to the form:
\begin{equation}
\begin{array}{lll}
\dot x &=& x(i+\ep - \sum_{j\geq 1} A_j(\ep)u^j):=P(x,y)\\
\dot y &=& y(-i+\ep - \sum_{j\geq 1}\overline{ A_j(\overline{\ep})}u^j):=Q(x,y)
\end{array}\label{ForClas}
\end{equation} 
where $Re ( A_1)\neq 0.$    In order to simplify the form, we iteratively use changes of coordinates $(x,y)=(\x(1+cU^n),\y(1+\overline cU^n))$ for $n\geq 1.$    Such a change allows to get rid of the term $ A_{n+1}U^{n+1}$ provided that $n+1>2.$     When $n=1$ it allows to get rid of $iIm(A_2U^2).$    Indeed, the constant $c$ must be chosen so as to verify $ A_1(c+\overline c)-nc(A_1+\overline{A_1})= A_{n+1},$ which is always solvable in $c$ as soon as $Re(A_1)\neq 0$ and $n>1.$    However, when $n=1$ we get $A_1(c+\overline c)-nc(A_1+\overline{ A_1})=A_1\overline c-\overline{ A_1}c=2iIm( A_1\overline c)\in i\RR.$    Hence, in that only case, the equation $ A_1(c+\overline c)-nc( A_1+\overline{ A_1})=iIm( A_{n+1})$ is solvable in $c.$
Finally, one divides \eqref{ForClas} by $\frac{yP-xQ}{2ixy}.$    This brings all the $Im(A_j)$ to $0.$    Then we repeat the procedure above with $c$ real to remove all higher terms in $u^j$ except for the term in $u^2.$   The cases $s=-1$ is analogous.
\end{proof}

It is easily seen that the multiplier at the origin of the Poincar\'e map of the field \eqref{formclawf} is equal to $\exp(2\pi\ep),$ so that the canonical parameter is also an analytic invariant of the Poincar\'e map.\\

\noindent{\emph{Admissible families of holomorphic germs.}}\\

Consider the germ of a holomorphic family $\Qq_{\ep}$ unfolding the germ of a codimension one analytic resonant diffeomorphism $\Qq$ with multiplier equal to $-1$ at the origin.    The formal normal form $\Qq_{0,\ep}$ of $\Qq_{\ep}$ is the semi-Poincar\'e map (or semi-monodromy) of the vector field \eqref{formclawf}, namely $\Qq_{0,\ep}=\Ll_{-1}\circ\tau_{\ep}^{\pi},$ where $\tau_{\ep}^{\pi}$ is the time $\pi$-map of the equation:
\begin{equation}
\begin{array}{lll}
\dot w = \frac{w(\ep\pm w^2)}{1+A(\ep)w^2}
\end{array}\label{semiFFNO}
\end{equation} 
and $\Ll_{-1}: w\mapsto -w.$

\begin{lemma}\label{fnor-Q}
Let $\Qq_{\ep}$ be a prepared family $(i.e.$ such that $\Qq_{\ep}^{\circ 2}$ has the form \eqref{pforPpp}$)$ unfolding a codimension one resonant diffeomorphism $\Qq$ with multiplier equal to $-1,$ and let $\Qq_{0,\ep}$ be its formal normal form, with same canonical parameter $\ep.$    Then, for any $N\in\NN^*$ there exists a real family of germs of diffeomorphisms $f_{\ep}$ tangent to the identity such that:
\begin{equation}
\begin{array}{lll}
\Qq_{\ep}\circ f_{\ep} - f_{\ep}\circ\Qq_{0,\ep} = O(x^{N+1}(\ep \pm x^2)^{N+1}).
\end{array}\label{fnor-QE}
\end{equation}
\end{lemma}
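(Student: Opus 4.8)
Throughout write $\delta_\ep(x):=x(\ep\pm x^2)$, so that the conclusion \eqref{fnor-QE} amounts to saying that $\Qq_\ep\circ f_\ep-f_\ep\circ\Qq_{0,\ep}$ lies in the ideal $(\delta_\ep^{\,N+1})$ of $\CC\{V\}[[x]]$. The plan is to first produce a \emph{formal} conjugacy between $\Qq_\ep$ and $\Qq_{0,\ep}$ and then to cut it off modulo $(\delta_\ep^{\,N+1})$ by a Weierstrass-type division — not by truncating in $x$-degree. Since $\Qq_{0,\ep}$ is by hypothesis the formal normal form of the prepared family $\Qq_\ep$ with the same canonical parameter, the normal form theory behind Proposition~\ref{formal-class-teo} — carried from the vector field \eqref{formclawf} to its semi-monodromy — yields a formal series $\hat f_\ep(x)=x+\sum_{k\ge2}\hat f_k(\ep)\,x^k$, with each $\hat f_k$ holomorphic on $V$ and real for real $\ep$, such that $\hat f_\ep\circ\Qq_{0,\ep}=\Qq_\ep\circ\hat f_\ep$ as formal power series. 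One builds $\hat f_\ep$ by the usual recursion on the $x$-degree; the invertible part of each homological equation is multiplication by a unit of $\CC\{V\}$ with value $\approx2$ at the origin, so no quantity vanishing at $\ep=0$ is ever inverted and analyticity in $\ep$ is preserved, while the solvability obstruction (at the odd degrees) vanishes because $\Qq_{0,\ep}$ is, by construction, the formal normal form of $\Qq_\ep$.

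Next, let $f_\ep$ be the remainder of $\hat f_\ep$ after division by a unit multiple of $\delta_\ep^{\,N+1}$ that is monic in $x$ of degree $3(N+1)$. This produces a \emph{polynomial} $f_\ep(x)$ of degree $\le3N+2$, with coefficients holomorphic on $V$ and real for real $\ep$ (reality is forced by uniqueness of the division), with $\hat f_\ep-f_\ep\in(\delta_\ep^{\,N+1})$. Because every monomial of $\delta_\ep^{\,N+1}$ has $x$-degree at least $N+1\ge2$, the polynomial $f_\ep$ still agrees with $\hat f_\ep$ up to order $N$; hence $f_\ep=x+O(x^2)$ is tangent to the identity and is the required real germ of diffeomorphism.

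It remains to check that this $f_\ep$ satisfies \eqref{fnor-QE}. On one hand, $\Qq_\ep\circ f_\ep-\Qq_\ep\circ\hat f_\ep$ is divisible by $f_\ep-\hat f_\ep$ (as $\Qq_\ep$ is a diffeomorphism), hence lies in $(\delta_\ep^{\,N+1})$. On the other hand $f_\ep\circ\Qq_{0,\ep}-\hat f_\ep\circ\Qq_{0,\ep}=(f_\ep-\hat f_\ep)\circ\Qq_{0,\ep}$, and here one uses the structural fact that the fixed-point set of $\Ll_{-1}\circ\Qq_{0,\ep}=\tau_\ep^{\pi}$ coincides, with multiplicities, with the zero set of $\delta_\ep$ — it is exactly the set of zeros of the normal-form vector field \eqref{semiFFNO} — so that $\Qq_{0,\ep}(x)+x\in(\delta_\ep)$, i.e.\ $\Qq_{0,\ep}(x)\equiv-x\pmod{\delta_\ep}$. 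A short computation then gives $\delta_\ep(\Qq_{0,\ep}(x))\equiv\delta_\ep(-x)=-\delta_\ep(x)\pmod{\delta_\ep^{\,2}}$, so $\delta_\ep\circ\Qq_{0,\ep}$ is a unit multiple of $\delta_\ep$ and $(f_\ep-\hat f_\ep)\circ\Qq_{0,\ep}\in(\delta_\ep^{\,N+1})$ as well. Subtracting and using $\Qq_\ep\circ\hat f_\ep=\hat f_\ep\circ\Qq_{0,\ep}$ exactly gives $\Qq_\ep\circ f_\ep-f_\ep\circ\Qq_{0,\ep}\in(\delta_\ep^{\,N+1})=(x^{N+1}(\ep\pm x^2)^{N+1})$, which is \eqref{fnor-QE}. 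Equivalently, one may avoid $\hat f_\ep$ altogether and construct $f_\ep$ by $N+1$ successive approximations inside $\CC\{V\}[[x]]/(\delta_\ep^{\,N+1})$, solving one homological equation on each graded step $(\delta_\ep^{\,k})/(\delta_\ep^{\,k+1})$.

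The point that will require the most care is that the error in \eqref{fnor-QE} is measured by the \emph{combined} quantity $x(\ep\pm x^2)$, not merely by the order in $x$: killing the discrepancy to high order in $x$ alone does not help, since $x^{M}$ does not lie in $(\delta_\ep^{\,N+1})$ for any $M$. This is precisely what forces the $\delta_\ep$-adic filtration and the Weierstrass setup, and the only genuine use of the hypotheses is the solvability of the homological equation on each graded quotient — equivalently, that $\Qq_{0,\ep}$ really is the formal normal form of $\Qq_\ep$. A secondary, technical worry is uniformity as $\ep\to0$, where $\delta_\ep$ degenerates to $\pm x^3$ and the three fixed points of $\Qq_{0,\ep}^{\circ2}$ coalesce; it is absorbed by the fact that the invertible part of the homological operator has unit determinant over $\CC\{V\}$, so every division performed remains legitimate at $\ep=0$.
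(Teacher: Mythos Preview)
The paper's own proof is a one-line citation: it says the result is a slight modification of Theorem~6.2 in \cite{ccrr}. Your proposal therefore supplies the argument the paper omits, and both routes you sketch --- build a formal conjugacy $\hat f_\ep$ and truncate by Weierstrass division in $\delta_\ep^{\,N+1}$, or run the induction directly in the $\delta_\ep$-adic filtration --- are in the spirit of that reference. The verification that a Weierstrass remainder of a formal conjugacy still satisfies \eqref{fnor-QE}, via $\delta_\ep\circ\Qq_{0,\ep}=(\text{unit})\cdot\delta_\ep$, is correct and nicely done.

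There is, however, a genuine gap at the heart of both routes, and it is exactly where the work in \cite{ccrr} lies. You assert that the homological operator is (or has ``invertible part'' which is) a unit over $\CC\{\ep\}$, but this is not so. On the graded piece $(\delta_\ep^{\,k})/(\delta_\ep^{\,k+1})$ --- a free $\CC\{\ep\}$-module on $\{1,x,x^2\}\cdot\delta_\ep^{\,k}$ --- the linearized conjugacy operator at $\ep=0$ reduces to $h\mapsto -h(x)-h(-x)$, which annihilates the odd-parity generators; equivalently, in your $x$-degree recursion the scalar at each odd degree $k\ge3$ is $\lambda(\ep)^k-\lambda(\ep)$, which vanishes to first order at $\ep=0$. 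So the operator is \emph{not} a unit over $\CC\{\ep\}$, and a division by something vanishing at $\ep=0$ is in principle required. What makes the induction go through --- and is the actual content of the cited theorem --- is that the preparation hypothesis on $\Qq_\ep^{\circ2}$ forces the discrepancy at each step to land in the \emph{image} of this degenerate operator, with the matching order of vanishing in $\ep$. That is precisely the step you summarize as ``the solvability obstruction vanishes because $\Qq_{0,\ep}$ is the formal normal form,'' but that sentence is the claim to be proved, not its proof. Filling it in --- by comparing $\Qq_\ep^{\circ2}$ and $\Qq_{0,\ep}^{\circ2}$ in the prepared form \eqref{pforPpp} and descending to the semi-monodromies --- is exactly the ``slight modification'' the paper alludes to.
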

\begin{proof}
The proof is a slight modification of Theorem 6.2 in \cite{ccrr}, being given that the preparation of the family of diffeomorphisms is slightly different as well.
\end{proof}

Any germ of family of holomorphic diffeomorphisms $\Qq_{\ep}:(\CC,0)\to(\CC,0)$ verifying the hypotheses of Lemma \ref{fnor-Q}, is said to be ``admissible''.

\begin{theorem}\label{desusa}
Let $\Qq_{\ep}:(\CC,0)\to(\CC,0)$ be a real analytic family in the class of admissible germs of families, with coefficients $c_k(\ep)$ depending analytically on the canonical parameter $\ep,$ and such that $2c_2(\ep)^2+c_3(\ep)(1+c_1(\ep)^2)\neq 0$ for all $\ep\in V.$    Then the second iterate $\Qq_{\ep}^{\circ 2}$ is the monodromy  of an elliptic generic family \eqref{weakfocus0} of order one.
\end{theorem}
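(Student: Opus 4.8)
The plan is to run the desingularization‑and‑surgery program sketched in the introduction, reducing the realization to a gluing problem on the blow‑up. By the theorem on the existence of invariant analytic manifolds already invoked in Section~2 (cf. \cite{YY},\cite{MM}), it suffices to produce, for every $\ep\in V$, a germ of holomorphic foliation in a neighbourhood of $\CC P^1$ in the blow‑up space which near the two singular points of the divisor coincides with the lift of the formal normal form \eqref{formclawf} in the charts \eqref{bdown}, depends analytically on $\ep$, is \emph{real} (invariant under $(x,y)\mapsto(\overline y,\overline x)$ for $\ep\in\RR$), and whose holonomy along the equator $\RR P^1$, computed on the section $\Sigma$, coincides with $\Qq_\ep$. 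Blowing such a foliation down --- the monoidal map being a biholomorphism off the divisor --- yields a holomorphic foliation on a punctured neighbourhood of $0\in\CC^2$; it extends across $0$ by Hartogs, its linear part at the origin has eigenvalues $\ep\pm i$ by construction, so the blow‑down is a generic elliptic family of the shape \eqref{weakfocus0}, and reality forces the constraint \eqref{realchar}. The monodromy (second iterate of the holonomy) of this family is then $\Qq_\ep^{\circ2}$.

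The second step is the surgery. Start from the foliation $\Ff_{\widehat\omega_\ep}$ of the normal form \eqref{formclawf} pulled back to the blow‑up; its semi‑monodromy along $\RR P^1$ is $\Qq_{0,\ep}$. Using the family $f_\ep$ furnished by Lemma~\ref{fnor-Q}, which makes $\Qq_\ep$ and $\Qq_{0,\ep}$ tangent along $x^{N+1}(\ep\pm x^2)^{N+1}$, I would build a fibered $C^\infty$ \emph{sealing map} $\Theta_\ep$ on a semi‑disk transverse to the divisor: with a partition of unity $\chi(\theta)$ in the angular coordinate $\theta=\arg X$ on the divisor, $\Theta_\ep$ equals the identity on one angular sector (where the regluing remains the normal‑form return map) and realizes the passage from $\Qq_{0,\ep}$ to $\Qq_\ep$ on a complementary sector, interpolating smoothly in between. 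Regluing $\Ff_{\widehat\omega_\ep}$ by $\Theta_\ep$ across a fundamental domain of the semi‑monodromy produces a $C^\infty$ foliation $\Ff$ over $\CC^*\times\DD_r$, holomorphic off the support of $d\chi$, whose semi‑monodromy along $\RR P^1$ is now $\Qq_\ep$; it depends analytically on $\ep$ and is real because $f_\ep$, $\chi$ and the normal form are. The sealing is chosen, by construction, infinitely tangent to the identity along the vertical axis.

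The third step makes $\Ff$ holomorphic. Setting $J_\ep:=\Theta_\ep^{*}J_{\mathrm{st}}$ (the pullback of the standard structure $J_{\mathrm{st}}$) on the regluing region and $J_\ep:=J_{\mathrm{st}}$ elsewhere defines a $C^\infty$ almost complex structure on $\CC^*\times\DD_r$ for which the leaves of $\Ff$ are holomorphic; being locally a pullback of $J_{\mathrm{st}}$ by a diffeomorphism, it is integrable. Since $\Theta_\ep$ is infinitely tangent to the identity along the vertical axis, $J_\ep$ extends $C^\infty$ across that axis, equals $J_{\mathrm{st}}$ there, and stays integrable, while remaining real‑analytic in $\ep$. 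A version of the Newlander--Nirenberg theorem with parameters then provides $C^\infty$ local complex coordinates, analytic in $\ep$, straightening $J_\ep$, in which $\Ff$ is a genuine holomorphic foliation; by Riemann's removable singularity theorem it extends holomorphically across the vertical axis. Blowing down and applying Hartogs as in the first step yields an elliptic generic family \eqref{weakfocus0} whose monodromy is $\Qq_\ep^{\circ2}$. Finally, the hypothesis $2c_2(\ep)^2+c_3(\ep)(1+c_1(\ep)^2)\neq0$ on $V$ is exactly the non‑vanishing of the coefficient of the resonant cubic monomial of $\Qq_\ep^{\circ2}$; since $\Qq_\ep$ is real this coefficient is real, which is precisely the condition (cf.~\eqref{pforPpp}) that the realized weak focus be of order one.

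I expect the main obstacle to be the step from $C^\infty$ to holomorphic at the divisor: one must arrange the sealing so that the semi‑monodromy of the reglued foliation is \emph{exactly} $\Qq_\ep$, yet $\Theta_\ep$ is genuinely $C^\infty$ and infinitely tangent to the identity along the vertical axis --- reconciling this with the finite‑order tangency of Lemma~\ref{fnor-Q} (e.g. by performing the gluing inside a sector on which the normal‑form flow contracts, so that the $N$‑flat defect transported by the flow becomes flat of every order, and/or by a telescoping construction in the transverse variable) --- and to carry the whole construction, together with the Newlander--Nirenberg normalization, with uniform control and analytic dependence in $\ep$ up to and including $\ep=0$, precisely the value at which the two divisor singularities are resonant ($\lambda(0)=\lambda'(0)=-\tfrac12\in\QM$) and the formal normalizing series of \eqref{formclawf} diverges. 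A secondary, but still delicate, bookkeeping point is to check that the reglued piece patches with the globally defined normal form over the rest of $\CC P^1$, so that $\Ff$ is a single foliation around the whole divisor.
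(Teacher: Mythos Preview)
Your proposal is correct and follows essentially the same route as the paper: abstract gluing of the normal form on a cover of the divisor via a fibered sealing map built from an angular cutoff, pullback of the standard complex structure to an integrable almost complex structure, Newlander--Nirenberg, then blow down and Hartogs. The concern you flag about reconciling the finite $(N+1)$-flatness of Lemma~\ref{fnor-Q} with the required smooth extension across the axis is resolved exactly as you suspect: the paper conjugates $id+g_\ep$ by the holonomy $h_{\ep,\widetilde X}$ and shows (its Proposition on asymptotic estimates) that this produces $O(|x|^{\frac{N}{2}(1-\ep_2)}|y|^{\frac{N}{2}(1+\ep_2)+1})$ control, so choosing $N$ large yields any prescribed finite order of flatness; and the reality plus analytic $\ep$-dependence of the Newlander--Nirenberg chart are obtained not from an abstract parametric statement but by writing the system explicitly as a Nijenhuis--Woolf integral equation and running the Picard iteration, checking by hand that the integral operators respect the symmetry $(z,w,\ep)\mapsto(\overline w,\overline z,\overline\ep)$.
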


\section{Proof of Theorem \ref{desusa}.}
The proof is achieved in several steps.
\subsection{Family of abstract manifolds.}    By Lemma \ref{fnor-Q}, $\Qq_{\ep}$ decomposes as
\begin{equation}
\begin{array}{lll}
\Qq_{\ep} &=& (id+g_{\ep})\circ\Qq_{0,\ep}\\
&=& \Qq_{0,\ep}\circ(id+\widehat{g}_{\ep}),
\end{array}\label{eq.decomp}
\end{equation} 
where $g_{\ep}$ and $\widehat{g}_{\ep}:=\Qq_{0,\ep}^{\circ -1}\circ g_{\ep}\circ\Qq_{0,\ep}$ are $(N+1)$-flat in $x$ at the origin: $g_{\ep}(x),\widehat{g}_{\ep}(x)=O(x^{N+1}(\ep\pm x^2)^{N+1})),$ for a large integer $N\in\NN.$

We recall that the standard monoidal map endows the blow-up space with coordinates $(X,y)$ and $(x,Y),$ and the transition between them is noted $\varphi.$\\

Let $v_{0,\ep}^{1}$ be the formal normal form given by the pullback of \eqref{formclawf} in $(X,y)$ variables (with linear part $2iX\frac{\partial}{\partial X} + (\ep-i)y\frac{\partial}{\partial y})$ and let $\Ff_{v_{0}^{1}}$ be its foliation on the product $\CC^*\times \DD_y,$ where $\DD_y$ is the standard unit disk of the $y$ axis.     Consider the region $$\widetilde K_{1} =\Big\{\widetilde X\in Cov(\CC^*): -\pi/4<\arg(\widetilde X)<2\pi +\pi/4\Big\}$$ in the covering space $Cov(\CC^*)$ of the exceptional divisor, see Figure \ref{dom-rie}.     
\begin{figure}[!h]
\begin{center} 
{\psfrag{i}{\Huge{$\infty$}}
\psfrag{0}{\Huge{$0$}}
\psfrag{S'}{\Huge{$S$}}
\psfrag{S}{\Huge{$S'$}}
\psfrag{ec}{\Huge{$\simeq$}}
\scalebox{0.30}{\includegraphics{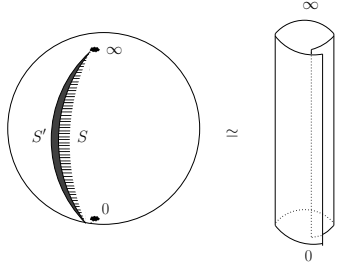}}
}
\end{center}
\caption{\label{dom-rie} \small{The domain of $\widetilde X$ in the covering space $Cov(\CC^*).$}}
\end{figure}
The pullback of $v_{0,\ep}^{1}$ by the covering map $\pi_{1}:\widetilde K_{1}\times\DD_y\to\CC^*\times\DD_y,$ defines a field $\widetilde v_{\ep}^{1}(\widetilde X,w)$ and a foliation $\widetilde{\Ff}_{v^1}$ on the product $\widetilde{M}=\widetilde{K}_{1}\times\DD_y.$   The leaves of $\widetilde{\Ff}_{v^1}$ around the \emph{flaps} 
\begin{equation*}
\begin{array}{lll}
S'_{1} &=& \{\widetilde X'\in\widetilde{K}_{c}: -\pi /4<\arg(\widetilde X')<\pi / 4\}\\
S_{1} &=& \{\widetilde X\in\widetilde{K}_{1}: 2\pi-\pi /4<\arg(\widetilde X)<2\pi + \pi /4\}
\end{array}
\end{equation*} 
are identified by means of a \emph{sealing map} $\Upsilon_{\ep}:S'_{1}\times\DD_y\to S_{1}\times\CC,$ which preserves the first coordinate and respects $\widetilde{\Ff}_{v^1}.$    It is constructed as follows.     For small values of $y,$ the holonomy map $h_{\ep,X}:\{X\}\times\DD_y\to\{1\}\times\DD_y$ along the leaves of $\Ff_{v_{0}^{1}}$ is covered by two holonomy maps, $h_{\ep,\widetilde X'}:\{\widetilde{X'}\}\times\DD_y\to \Sigma'\times\DD_y$ and $h_{\ep,\widetilde X}:\{\widetilde X\}\times\DD_y\to \Sigma\times\DD_y$ along the leaves of $\widetilde{\Ff}_{v^1}.$    The holonomies is negatively (resp. positively) oriented and noted $h_{\ep}^-$ (resp. $h_{\ep}^+),$ when $Im(X)>0$ (resp. $Im( X)<0).$    The convention:
\begin{equation}
\begin{array}{lll}
\lim_{\widetilde X\to\widetilde 1} h_{\ep,\widetilde X}^+=id
\end{array}\label{lim--zo}
\end{equation}
will be taken into account as well.   Then $\Upsilon_{\ep}(\widetilde{X}',y)=(\widetilde{X},\Delta_{\ep}(\widetilde{X}',y)),$ where
\begin{equation}
\begin{array}{lll}
\Delta_{\ep}(\widetilde{X}',y)=(h_{\ep,\widetilde X}^+)^{\circ -1}\circ(id+g_{\ep})\circ h_{\ep,\widetilde X'}^+(y),
\end{array}\label{eq.sealing}
\end{equation}
with $\pi_1(\widetilde X')=\pi_1(\widetilde X).$   The map $\Upsilon_{\ep}$ is well defined and real analytic on its image for $r>0$ small, and it depends analytically on the parameter.  Thus, it may be analytically extended to a larger domain $\{\widetilde X\in\widetilde{K}_{1} : -\pi /4<\arg(\widetilde X)<\pi\}\times\DD_y.$

Around a region of the covering of the $(x,Y)$ chart, things are naturally defined by means of the transition $\varphi.$    In particular, the family $\Dosilon_{\ep}=\varphi^*\Upsilon_{\ep}$ is a sealing map $\Dosilon_{\ep}(\widetilde Y',z)=(\widetilde Y,\nabla_{\ep}(\widetilde Y',z))$ with
\begin{equation}
\begin{array}{lll}
\nabla_{\ep}(\widetilde Y',z)=(\ell_{\ep,\widetilde Y}^+)^{\circ -1}\circ(id+\widehat{g}_{\ep})\circ\ell_{\ep,\widetilde Y'}^+(z),
\end{array}\label{Dosilon}
\end{equation}
and the transition $\varphi$ defines a field $\widetilde v_{\ep}^{2}(x,\widetilde Y),$ and foliation $\widetilde{\Ff}_{v^2}$ on the product $\widetilde{N}:=\varphi^*\widetilde{M}$ (the latter endowed with complex coordinates $(x,\widetilde Y)$ defined in the natural way).   Here, $\ell_{\ep}^{\pm}$ are the holonomies along the leaves of $\widetilde{\Ff}_{v^2}.$   The map $\Dosilon_{\ep}$ is real analytic on its image.\\

As the sealing $(\Upsilon_{\ep},\Dosilon_{\ep})$ is canonically defined on the divisor, it defines a sealing family noted $\Gamma_{\ep}:\widetilde{\Mm}\to\widetilde{\Mm},$ where $\widetilde{\Mm}$ is the pullback of $(\widetilde{M},\widetilde{N})$ by the inverse of the monoidal map.    The vector fields $(\widetilde v_{\ep}^{1},\widetilde v_{\ep}^{2})$ and foliations $(\widetilde{\Ff}_{v^1},\widetilde{\Ff}_{v^2})$ induce a vector field $\widetilde v_{\ep}$ and a foliation $\widetilde{\Ff}_{\ep}$ on $\widetilde{\Mm},$ and the coordinates on the latter are $(x,y).$    Moreover, $\Gamma_{\ep}$ is a germ of real analytic family of diffeomorphisms that preserves the transversal fibers $\Sigma_{\mu}=\{x=\mu y,\mu\in\CC^*\},$ and $(\Gamma_{\ep})_*\widetilde v_{\ep} = \widetilde v_{\ep},$ so that $\Gamma_{\ep}$ respects $\widetilde{\Ff}_{\ep}.$    Then, the quotient $$\Mm_{\ep}=\widetilde{\Mm} / \Gamma_{\ep}$$ is well defined and the vector field $\widetilde v_{\ep}$ induces a vector field $v_{\ep}$ and a foliation $\Ff_{\ep}$ on $\Mm_{\ep}.$  The leaves of this foliation project without critical points on the base $\CC^*\times\DD_y$ in the $(X,y)$ chart $(i.e$ are transversal to all lines $\{X=const.\}),$ and hence the loop generating the fundamental group of $\CC^*\times\DD_y$ defines the holonomy map of the quotient foliation $\Ff_{\ep}$ on $\Mm_{\ep}$ (for the cross section $\Sigma),$ referred to as the semi-monodromy.
\begin{proposition}\label{monod-vep}
The monodromy $\Sigma\to\Sigma$ of the field $v_{\ep}$ along the leaves of $\Ff_{\ep}$ coincides with $\Qq_{\ep}.$
\end{proposition}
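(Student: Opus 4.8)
\emph{Strategy and set-up.} By construction the map to be compared with $\Qq_{\ep}$ is the holonomy of $\Ff_{\ep}$ along the loop $\delta$ generating $\pi_1(\CC^*\times\DD_y)$ in the $(X,y)$ chart, read on the cross section $\Sigma=\{X=1\}$; it is well defined because the leaves of $\Ff_{\ep}$ project onto $\CC^*\times\DD_y$ without critical points, hence transversally to every fiber $\{X=\text{const}\}$. The plan is to lift $\delta$ to $\widetilde{\Mm}$, split the lift into the portion carried inside the covering region $\widetilde{K}_{1}$ — where $\widetilde{\Ff}_{v^1}=\pi_{1}^{*}\Ff_{v_{0}^{1}}$ is the pullback of the formal normal form — and the portion realized by the identification $\Gamma_{\ep}$, compute each piece, and assemble. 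Concretely, $\delta$ lifts to an arc $\widetilde{\delta}\subset\widetilde{K}_{1}\times\DD_y$ running from the copy $\Sigma'$ of $\{X=1\}$ at $\arg\widetilde X=0$ to the copy $\Sigma$ at $\arg\widetilde X=2\pi$, closed up by $\Gamma_{\ep}$, which on $\widetilde{M}$ acts as the sealing $\Upsilon_{\ep}$ gluing the flap $S'_{1}$ to the flap $S_{1}$ and so returns to $\Sigma'$.

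\emph{The two contributions and their assembly.} Since $\pi_{1}$ is a covering and $\widetilde{\delta}$ projects onto $\delta$, the holonomy of $\widetilde{\Ff}_{v^1}$ along $\widetilde{\delta}$ equals the holonomy of $\Ff_{v_{0}^{1}}$ along $\delta$, i.e. the semi-monodromy of the formal normal form \eqref{formclawf} read after blow-up, which is $\Qq_{0,\ep}$ (same cross section, since $\{X=1\}=\{x=y\}$). For the gluing, by \eqref{eq.sealing} one has $\Upsilon_{\ep}(\widetilde X',y)=(\widetilde X,\Delta_{\ep}(\widetilde X',y))$ with $\pi_{1}(\widetilde X')=\pi_{1}(\widetilde X)$; restricting to the cross section forces $\widetilde X'=\widetilde 1$ at $\arg=0$ and $\widetilde X=\widetilde 1\,e^{2\pi i}$ at $\arg=2\pi$, and the normalization \eqref{lim--zo}, propagated by analytic continuation around $\widetilde{K}_{1}$, gives $h^{+}_{\ep,\widetilde 1}=h^{+}_{\ep,\widetilde 1 e^{2\pi i}}=id$, whence $\Delta_{\ep}(\widetilde 1,\cdot)=id+g_{\ep}$: the identification $\Gamma_{\ep}$ glues the leaf of $\widetilde{\Ff}_{v^1}$ meeting $\Sigma'$ at $\zeta$ to the leaf meeting $\Sigma$ at $(id+g_{\ep})(\zeta)$. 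Composing the $\widetilde{\delta}$-holonomy $\Qq_{0,\ep}$ with the cross-sectional factor $id+g_{\ep}$ gives the holonomy of $\Ff_{\ep}$ along $\delta$, and by \eqref{eq.decomp} this composition equals $\Qq_{\ep}$. Performing the same computation in the $(x,Y)$ chart with $\Dosilon_{\ep}=\varphi^{*}\Upsilon_{\ep}$ and \eqref{Dosilon} in place of \eqref{eq.sealing} produces the factor $id+\widehat{g}_{\ep}$ on the other side of $\Qq_{0,\ep}$, which by \eqref{eq.decomp} is again $\Qq_{\ep}$; the two computations agree, so the monodromy does not depend on the chart in which it is read.

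\emph{Main obstacle.} The delicate part is the orientation bookkeeping: one must track, on each half of $\delta$, which of the oriented holonomies $h^{\pm}_{\ep,\widetilde X}$ occurs (according to the sign of $Im(X)$), the orientation fixed for the generator of $\pi_1(\CC^*\times\DD_y)$, and the sign carried by the factor $\Ll_{-1}$ inside $\Qq_{0,\ep}$ (whose multiplier is $-1$). It is precisely the shape of the sealing formula \eqref{eq.sealing} — the factor $id+g_{\ep}$ conjugated by the holonomies $h^{+}$ normalized through \eqref{lim--zo} — that forces these signs and sheet ambiguities to cancel, so that the assembled map is $\Qq_{\ep}$ exactly, and not a conjugate or an inverse of it. Once that is checked, the proposition follows.
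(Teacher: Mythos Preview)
Your proof is correct and follows essentially the same route as the paper's: lift the generating loop to $\widetilde{\Mm}$, recognize the holonomy of $\widetilde{\Ff}_{v^1}$ from $\Sigma'$ to $\Sigma$ as the formal normal form $\Qq_{0,\ep}$, use the normalization \eqref{lim--zo} to reduce the sealing $\Delta_{\ep}$ on the cross section to $id+g_{\ep}$, and invoke \eqref{eq.decomp} to obtain $\Qq_{\ep}$. The paper carries this out in three lines in $(x,y)$ coordinates; your added remarks on the $(x,Y)$ chart and on orientation bookkeeping are not needed but do no harm. One phrasing to tighten: the equality $h^{+}_{\ep,\widetilde 1}=h^{+}_{\ep,\widetilde 1 e^{2\pi i}}=id$ is not really ``analytic continuation around $\widetilde K_1$''—that would pick up the monodromy of the normal form—but rather the fact that each of the two lifted holonomies targets its own copy of the section and is trivially (or by the convention \eqref{lim--zo}) the identity there.
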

\begin{proof}
The holonomy $h_{\ep,\widetilde 1}:\{\widetilde 1'\}\times\DD_y\to\{\widetilde 1\}\times\DD_y$ of $\widetilde v_{\ep}^{1}$ coincides, by construction, with the normal form $\Qq_{0,\ep}$ on $\widetilde M.$    Then, in $(x,y)$ variables, the image of the point $(x,x)\in\Sigma$ under the holonomy of $\widetilde{v}_{\ep}$ (for the section $\Sigma)$ is given by $(\Qq_{0,\ep}(x),\Qq_{0,\ep}(x))\in\Sigma.$    In addition,
\begin{equation*}
\begin{array}{lll}
\Gamma_{\ep}(\Qq_{0,\ep}(x),\Qq_{0,\ep}(x)) &=& (\Delta_{\ep}(1,\Qq_{0,\ep}(x)),\Delta_{\ep}(1,\Qq_{0,\ep}(x)))\\
&=& ((id+g_{\ep})\circ\Qq_{0,\ep}(x),(id+g_{\ep})\circ\Qq_{0,\ep}(x))\\
&=& (\Qq_{\ep}(x),\Qq_{\ep}(x))\in\Sigma,
\end{array}
\end{equation*}
where the second equality comes after \eqref{lim--zo}.
\end{proof}

\subsection{Integrability on $H_{\ep}(\Mm_{\ep})$}
In $(\widetilde X,y)$ coordinates, we introduce a smooth \emph{real} nonnegative cutoff function $\chi$ \emph{depending only on the argument of} $\widetilde X:$
\begin{equation*}
\chi(\arg\widetilde X)=\left \{ \begin{array}{lll}
1,\quad \arg\widetilde X\in(-\pi /4,\pi /4],\\
0,\quad \arg\widetilde X'\in(\pi,2\pi+\pi /4].
\end{array}
\right.
\end{equation*}
An ``identification map'' $\widetilde H_{\ep}^{1}$ is defined on $\widetilde M:$ 
\begin{equation}
\begin{array}{lll}
\widetilde H_{\ep}^{1} : (\widetilde X, y)\mapsto(\widetilde X, y + \chi(\arg\widetilde X)\{\Delta_{\ep}(\widetilde X,y)-y\}),
\end{array}\label{id-mapZ}
\end{equation}
for $c_1,c_2$ the monoidal map in charts \eqref{bdown}.    Notice that $\widetilde{H}_{\ep}^{1}|_{S'_{1}\times\DD_y} \equiv (id_X,\Delta_{\ep})$ and $\widetilde{H}_{\ep}^{1}|_{S_{1}\times\DD_y} \equiv  (id_X,id_y),$ and so this map respects the sealing $\Upsilon_{\ep}.$\\

In $(x,y)$ variables the function $\chi$ yields a real smooth map $\widehat{\chi}(x,y)=\chi(\arg(x/y))$ which depends only on the argument of the quotient $x /y,$ and the blow down of \eqref{id-mapZ} in $(x,y)$ coordinates equips the target space with coordinates $(z,w):$
\begin{equation}
\begin{array}{lll}
(z,w) = \widetilde{H}_{\ep}(x,y)\\
= (x+\widehat{\chi}(x,y)\{\nabla_{\ep}\circ c_2^{-1}(x,y)-x\},w+\widehat{\chi}(x,y)\{\Delta_{\ep}\circ c_1^{-1}(x,y)-y\}).
\end{array}\label{H_0--}
\end{equation}
By definition, $\widetilde{H}_{\ep}$ induces an ``identification family'' in the quotient: $$H_{\ep}:\Mm_{\ep}\to\CC^2.$$    For every fixed $\ep,$ the latter is a real analytic diffeomorphism which endows the target space with an almost complex structure induced from the standard complex structure on $\Mm_{\ep},$ as shown later.    In addition, it depends analytically on the parameter.  If the function $g$ in \eqref{eq.decomp} is $(N+1)$-flat at $x = y = 0,$ then \eqref{H_0--} is infinitely tangent to the origin:
\begin{proposition}\label{funcgH_0-ac}
The maps \eqref{eq.sealing} and \eqref{Dosilon} admit the asymptotic estimates
\begin{equation}
\begin{array}{lll}
|\Delta_{\ep}\circ c_1^{-1}(x,y)-y| &=& O(|x|^{\frac{N}{2}(1-\ep_2)}|y|^{\frac{N}{2}(1+\ep_2)+1})\\
|\nabla_{\ep}\circ c_2^{-1}(x,y)-x| &=& O(|x|^{\frac{N}{2}(1+\ep_2)+1}|y|^{\frac{N}{2}(1-\ep_2)})
\end{array}\label{a-est--}
\end{equation} 
in the bidisk $\DD_{x}\times\DD_{y},$ where $\ep=\ep_1+i\ep_2.$
\end{proposition}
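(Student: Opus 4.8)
The plan is to trace the entire defect $\Delta_\ep(\widetilde X',y)-y$ back to $g_\ep$ alone, and then to bound it using a \emph{sharp} estimate of the normal--form holonomies. First I would note that, on the covering, the two holonomies in \eqref{eq.sealing} are holonomies of one and the \emph{same} lifted normal form $\widetilde{\Ff}_{v^1}$, and that $\widetilde X$ and $\widetilde X'$ project to the same point $X\in\CC^*$; since the deck transformation of $\pi_1$ acts as the identity on the fibre $\DD_y$ and preserves $\widetilde{\Ff}_{v^1}$, the maps $h_{\ep,\widetilde X'}^+$ and $h_{\ep,\widetilde X}^+$ are the same transition map of $\DD_y$ --- equivalently, the normalization \eqref{lim--zo} forces $(h_{\ep,\widetilde X}^+)^{\circ -1}\circ h_{\ep,\widetilde X'}^+=id$. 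Writing
$$\Delta_\ep(\widetilde X',y)-y=(h_{\ep,\widetilde X}^+)^{\circ -1}\!\bigl(h_{\ep,\widetilde X}^+(y)+g_\ep(h_{\ep,\widetilde X'}^+(y))\bigr)-(h_{\ep,\widetilde X}^+)^{\circ -1}\!\bigl(h_{\ep,\widetilde X}^+(y)\bigr)$$
and applying the mean value theorem in the single complex variable $y$ (with $\ep,\widetilde X'$ as parameters), one gets a point $\zeta$ between $h_{\ep,\widetilde X}^+(y)$ and $(id+g_\ep)(h_{\ep,\widetilde X'}^+(y))$ with $|\Delta_\ep(\widetilde X',y)-y|\le\bigl|[(h_{\ep,\widetilde X}^+)^{\circ -1}]'(\zeta)\bigr|\cdot\bigl|g_\ep(h_{\ep,\widetilde X'}^+(y))\bigr|$.

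Next I would supply the quantitative input. The pull--back of \eqref{formclawf} in the $(X,y)$-chart has $\dot X=2iX$ identically and, along its leaves, $dy/dX=\lambda(\ep)\,y/X+O(y^{3})$, so that $yX^{-\lambda(\ep)}$ is constant along leaves up to a correction which is quadratically small in $|y|$; integrating this along the radial--plus--cylindrical lift of the path from $\{X=X'\}$ to $\Sigma$, exactly as in the proof of Theorem~\ref{mmexten-unfolding} (cylindrical contribution bounded via Lemma~\ref{scylsol}, radial contribution via estimates of the type \eqref{voodoopre}), I expect to obtain, uniformly in $\ep\in V$ and on the domain of the sealing,
$$c^{-1}|y|\,|X'|^{\frac{1-\ep_2}{2}}\ \le\ \bigl|h_{\ep,\widetilde X'}^+(y)\bigr|\ \le\ c\,|y|\,|X'|^{\frac{1-\ep_2}{2}},\qquad\bigl|[(h_{\ep,\widetilde X}^+)^{\circ -1}]'\bigr|\ \le\ c\,|X|^{-\frac{1-\ep_2}{2}},$$
using $\mathrm{Re}\,\lambda(\ep)=-\tfrac12(1-\ep_2)$ and absorbing into $c$ the bounded $\arg\widetilde X'$-dependent factor $e^{-\ep_1\arg\widetilde X'/2}$. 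The point to stress is that the crude exponential bounds of Lemma~\ref{scylsol} do not suffice here: one needs the exact exponent $\mathrm{Re}\,\lambda(\ep)$, which is why the linear monomial $yX^{-\lambda(\ep)}$ of the normal--form holonomy must first be split off.

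With these in hand, the conclusion is arithmetic. By Lemma~\ref{fnor-Q} and the decomposition \eqref{eq.decomp}, $g_\ep(w)=O(w^{N+1}(\ep\pm w^{2})^{N+1})=O(|w|^{N+1})$ on a small disk and for $\ep\in V$ (since $|\ep\pm w^{2}|$ is bounded there); substituting $w=h_{\ep,\widetilde X'}^+(y)$ into the mean value inequality and using $|X|=|X'|$ gives $|\Delta_\ep(\widetilde X',y)-y|\le c'|y|^{N+1}|X'|^{\frac N2(1-\ep_2)}$, and then $\Delta_\ep\circ c_1^{-1}(x,y)=\Delta_\ep(\widetilde{x/y},y)$ together with $|X'|=|x|/|y|$ yields, after collecting exponents, the first line of \eqref{a-est--} on the part of $\DD_x\times\DD_y$ where $\Delta_\ep\circ c_1^{-1}$ is defined (i.e.\ where $\arg(x/y)$ lies in the relevant flap and $h_{\ep,\widetilde X'}^+(y)\in\DD_r$). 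The second line follows by running the identical argument in the $(x,Y)$-chart through the transition $\varphi$: there the relevant ratio of eigenvalues is $\lambda'(\ep)$, with $\mathrm{Re}\,\lambda'(\ep)=-\tfrac12(1+\ep_2)$; the map $\widehat g_\ep$, which is as flat as $g_\ep$, plays the role of $g_\ep$, and one substitutes $c_2^{-1}(x,y)=(x,y/x)$.

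The step I expect to be the main obstacle is the sharp holonomy estimate: pinning down the \emph{exact} exponent $\tfrac12(1-\ep_2)$ --- rather than the slack $|\ep|+\tfrac14$ of Lemma~\ref{scylsol} --- uniformly in $\ep\in V$ and over all $|X'|$ allowed in the domain of the sealing. The delicate mechanism is that the nonlinear correction to $d(yX^{-\lambda(\ep)})/dX$ is of size $O(y^{3})$, and upon integration along the radial lift (where $|dX|\asymp|X|\,ds$) this precisely compensates the factor $|X^{2\lambda(\ep)}|=|X|^{-(1-\ep_2)}$, so that the accumulated correction is $\asymp|h_{\ep,\widetilde X'}^+(y)|^{3}$, hence negligible against the main term $\asymp|h_{\ep,\widetilde X'}^+(y)|$ precisely on the region $|h_{\ep,\widetilde X'}^+(y)|<r$, which is the domain of the sealing. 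Granting this, the remaining steps are routine; one must only check that $\zeta$ and $h_{\ep,\widetilde X'}^+(y)$ stay in the disk on which $g_\ep$ and the inverse holonomy are defined, which holds exactly where $\Delta_\ep\circ c_1^{-1}$ is.
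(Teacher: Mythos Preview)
Your proof is correct and follows essentially the same route as the paper: both arguments recognize that $(h_{\ep,\widetilde X}^+)^{\circ -1}\circ h_{\ep,\widetilde X'}^+=id$ so that the entire defect $\Delta_\ep-id$ comes from $g_\ep$, both invoke the sharp holonomy estimate $|h_{\ep,\widetilde X}(y)|\asymp |\widetilde X|^{(1-\ep_2)/2}|y|$ (which the paper simply quotes as ``well known'' with the two-sided bound involving $e^{\pm M|\lambda(\ep)(\widetilde X-1)|}$), and both substitute $x=\widetilde Xy$ to convert $|\widetilde X|^{\frac{N}{2}(1-\ep_2)}|y|^{N+1}$ into the stated bidisk estimate. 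Your version is more explicit---you spell out the mean value step and the derivative bound $|[(h_{\ep,\widetilde X}^+)^{\circ -1}]'|\le c|X|^{-(1-\ep_2)/2}$ where the paper compresses everything into the single line $h_{\ep,\widetilde X}^{-1}\circ(h_{\ep,\widetilde X'}+g\circ h_{\ep,\widetilde X'})=id+O(|\widetilde X|^{\frac{N}{2}(1-\ep_2)}|y|^{N+1})$---and your remark that Lemma~\ref{scylsol} alone is too crude for the exact exponent is a useful clarification the paper leaves implicit.
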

\begin{proof}
In $(\widetilde X,y)$ variables, the following estimate for the holonomy map $h_{\ep,\widetilde X}:\{\widetilde X\}\times\DD_{w}\to\{\widetilde 1\}\times\CC$ is well known:
\begin{equation*}
\begin{array}{lll}
e^{-M|\lambda(\ep)(\widetilde X-1)|-\frac{\ep_1\arg\widetilde X}{2}}|\widetilde X|^{\frac{1-\ep_2}{2}}|y|\leq |h_{\ep,\widetilde X}(y)|\leq e^{M|\lambda(\ep)(\widetilde X-1)|-\frac{\ep_1\arg\widetilde X}{2}}|\widetilde X|^{\frac{1-\ep_2}{2}}|y|,
\end{array}
\end{equation*} 
where $M=M(\widetilde X,y)<\infty$ is a positive constant depending on a bound for the nonlinear part of the foliation along the segment with endpoints $\widetilde X,1,$ and $\lambda(\ep)=(\ep-i) / 2i$ is the ratio of eigenvalues in $(X,y)$ chart.  By \eqref{eq.decomp}, $$h_{\ep,\widetilde X}^{-1}\circ (id+g)\circ h_{\ep,\widetilde X'}=h_{\ep,\widetilde X}^{-1}\circ(h_{\ep,\widetilde X'}+g\circ h_{\ep,\widetilde X'})=id+O(|\widetilde X|^{\frac{N}{2}(1-\ep_2)}|y|^{N+1}).$$   In $(x,Y)$ coordinates, the estimate is obtained by symmetry.  Since $x=\widetilde Xy$ and $y=\widetilde Yx,$ the conclusion follows.
\end{proof}
\begin{corollary}\label{gtangid}
The family $H_{\ep}$ is tangent to the identity.
\end{corollary}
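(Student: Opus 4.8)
The plan is to obtain Corollary \ref{gtangid} as an immediate consequence of the asymptotic estimates of Proposition \ref{funcgH_0-ac} together with the boundedness of the argument–cutoff. First I would recall that, in the $(x,y)$-coordinates obtained after blowing down, the identification family $H_{\ep}$ is represented by the map $\widetilde H_{\ep}$ of \eqref{H_0--}, so that its displacement from the identity is
\begin{equation*}
\widetilde H_{\ep}(x,y)-(x,y)=\Big(\widehat{\chi}(x,y)\{\nabla_{\ep}\circ c_2^{-1}(x,y)-x\}\,,\ \widehat{\chi}(x,y)\{\Delta_{\ep}\circ c_1^{-1}(x,y)-y\}\Big).
\end{equation*}
Since $0\le\widehat{\chi}\le 1$ everywhere, each of the two components is bounded in modulus by the corresponding quantity controlled in \eqref{a-est--}.

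Next I would combine the two estimates of \eqref{a-est--}. Writing $\ep=\ep_1+i\ep_2$ with $|\ep_2|<1$ on $V$, both exponents $\tfrac{N}{2}(1\pm\ep_2)$ are nonnegative, and the total homogeneous degree of each bounding monomial is $\tfrac{N}{2}(1-\ep_2)+\tfrac{N}{2}(1+\ep_2)+1=N+1$. Hence on the bidisk $\DD_{x}\times\DD_{y}$ one has
\begin{equation*}
\big|\widetilde H_{\ep}(x,y)-(x,y)\big|\le C\,\max(|x|,|y|)^{N+1},
\end{equation*}
with $C=C(\ep)$ locally bounded in the parameter; that is, the displacement is $O(\|(x,y)\|^{N+1})$.

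Then I would conclude. Because $N$ is a large integer ($N\ge 1$ already suffices), setting $\widetilde H_{\ep}(0,0):=(0,0)$ extends $\widetilde H_{\ep}$ continuously across the origin, and the squeeze $\|\widetilde H_{\ep}(x,y)-(x,y)\|=o(\|(x,y)\|)$ forces $\widetilde H_{\ep}$ to be differentiable at $(0,0)$ with $D_{(0,0)}\widetilde H_{\ep}=id$. Passing to the quotient, $H_{\ep}$ fixes the point corresponding to the origin and is tangent to the identity there; since $N$ may be prescribed arbitrarily large at the outset of the construction, the displacement is in fact flat to any preassigned finite order, which is the precise sense in which the sealing is "infinitely tangent to the identity."

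The one point requiring a little care — and the step I would flag as the main obstacle — is the interplay between the non-smoothness of $\widehat{\chi}(x,y)=\chi(\arg(x/y))$ along $\{xy=0\}$ and the claimed identity-tangency at the origin. This is resolved exactly by the structure of the displacement: only a \emph{bounded} factor of $\widehat{\chi}$ enters, while Proposition \ref{funcgH_0-ac} controls the genuinely small factor uniformly, so the product is squeezed between $\pm C\|(x,y)\|^{N+1}$ with $N+1\ge 2$, and the first-order jet at the origin is thereby pinned to that of the identity irrespective of the behaviour of $\widehat{\chi}$ off the origin.
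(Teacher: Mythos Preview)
Your proposal is correct and follows essentially the same approach as the paper: the corollary is stated without proof there, as an immediate consequence of Proposition~\ref{funcgH_0-ac}, and you have simply made explicit the one-line reason --- the displacement of $\widetilde H_{\ep}$ from the identity is $\widehat{\chi}$ times the quantities bounded in \eqref{a-est--}, whose total homogeneous degree is $N+1$. Your remark on the irrelevance of the non-smoothness of $\widehat{\chi}$ at this stage (only its boundedness enters) is a useful clarification that the paper leaves implicit.
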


The pullback of the complex structure on $\Mm_{\ep}$ by the map $H_{\ep}^{-1}$ is an almost complex structure defined by the pullback of the $(1,0)$-subbundle on $\Mm_{\ep},$ which is spanned by 
\begin{equation}
\begin{array}{lll}
\widetilde{\zeta}_{1,\ep} = dz = d(x+\widehat{\chi}\cdot\{\nabla_{\ep}\circ c_2^{-1}-x\}),\\
\widetilde{\zeta}_{2,\ep} = dw = d(y+\widehat{\chi}\cdot\{\Delta_{\ep}\circ c_1^{-1}-y\}),
\end{array}\label{diforms}
\end{equation} 
on $\widetilde H_{\ep}(\widetilde{\Mm}).$   The forms $d(\nabla_{\ep}\circ c_2^{-1})$ and $d(\Delta_{\ep}\circ c_1^{-1})$ are holomorphic on their domains and $\widetilde{\zeta}_{1,\ep}$ and $\widetilde{\zeta}_{2,\ep}$ have two different sectorial representatives: 
\begin{equation}
\left.
\begin{array}{lll}
\widetilde{\zeta}_{1,\ep}=\left \{ \begin{array}{lll}
\zeta_{1,\ep}^0=dx, & |\arg x-\arg y-13\pi / 8|< 5 \pi/8, \\
\zeta_{1,\ep}^1=d(\nabla_{\ep}\circ c_2^{-1}), & |\arg x-\arg y|< \pi /4,
\end{array}
\right.\\
&\\
\widetilde{\zeta}_{2,\ep}=\left \{ \begin{array}{lll}
\zeta_{2,\ep}^1=d(\Delta_{\ep}\circ c_2^{-1}), & |\arg x-\arg y|< \pi /4\\
\zeta_{2,\ep}^0=dy, & |\arg x-\arg y-13\pi / 8|< 5 \pi/8,
\end{array}
\right.
\end{array}
\right.
\end{equation}
so that $\zeta_{1,\ep}^1=\Gamma_{\ep}^*\zeta_{1,\ep}^0$ and $\zeta_{2,\ep}^1=\Gamma_{\ep}^*\zeta_{2,\ep}^0.$    Thus they yield forms $\zeta_{1,\ep}$ and $\zeta_{2,\ep}$ on $\Mm_{\ep}.$    The almost complex structure induced on $H_{\ep}(\Mm_{\ep})\subset\CC^2$ by the complex structure on $\Mm_{\ep}$ is defined by the two forms 
\begin{equation}
\begin{array}{lll}
\omega_{1,\ep}=(H_{\ep}^{-1})^*\zeta_{1,\ep},\quad\quad\omega_{2,\ep}=(H_{\ep}^{-1})^*\zeta_{2,\ep}.
\end{array}\label{alcomplexstructure}
\end{equation}

\begin{lemma}\label{estimateforms}
Let $\delta$ be a small positive number with $|\ep|<\delta.$    If $\alpha$ and $\beta$ are the orders of flatness in $x$ and $y$ (resp. $y$ and $x)$ of the difference $\omega_{1,\ep}-dx$ (resp. $\omega_{2,\ep}-dy),$ then the form $\omega_{1,\ep}$ (resp. $\omega_{2,\ep})$ can be extended as $dx$ (resp. $dy)$ along the $x$-axis (resp. $y$-axis) until the order $\alpha$ if the number $N$ in \eqref{eq.decomp} is sufficiently large so as to ensure
\begin{equation}
\begin{array}{lll}
N >\max\left\{\frac{2(\alpha-1)}{1-\delta},\frac{2\beta}{1-\delta}\right\}.
\end{array}\label{N-beta}
\end{equation}
\end{lemma}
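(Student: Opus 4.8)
The plan is to follow $\omega_{1,\ep}=(H_\ep^{-1})^{*}\zeta_{1,\ep}$ (see \eqref{alcomplexstructure}) through its two sectorial representatives and to convert the flatness estimates of Proposition \ref{funcgH_0-ac} into the announced order of the extension. First I would invert $H_\ep$: writing $H_\ep=\mathrm{id}+E_\ep$, equation \eqref{H_0--} shows that the components of $E_\ep$ are $\widehat\chi\{\nabla_\ep\circ c_2^{-1}-x\}$ and $\widehat\chi\{\Delta_\ep\circ c_1^{-1}-y\}$, which by Proposition \ref{funcgH_0-ac} and the bounds $1\pm\ep_2>1-\delta$ (valid because $|\ep|<\delta$) are $O(|x|^{\frac N2(1-\delta)+1}|y|^{\frac N2(1-\delta)})$ and $O(|x|^{\frac N2(1-\delta)}|y|^{\frac N2(1-\delta)+1})$ respectively. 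Since $\widehat\chi$ is bounded and, by Corollary \ref{gtangid}, $E_\ep$ together with its first derivatives is small near the separatrices, $H_\ep$ is invertible there and a Neumann/fixed-point argument gives $H_\ep^{-1}=\mathrm{id}+\widetilde E_\ep$ with $\widetilde E_\ep$ obeying the same two flatness estimates; the cutoff is transported into $\widehat\chi(\arg(z/w))$, which differs from $\widehat\chi(\arg(x/y))$ by a flat quantity and whose coordinate derivatives keep costing exactly one negative power of the corresponding variable each, because $\widehat\chi$ depends only on an argument.

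Next I would compute the two local expressions of $\omega_{1,\ep}$. On the large sector $\zeta_{1,\ep}=dx$, so $\omega_{1,\ep}=(H_\ep^{-1})^{*}dx=dx+d\big((\widetilde E_\ep)_1\big)$; on the sector $|\arg x-\arg y|<\pi/4$ one has $\zeta_{1,\ep}=d(\nabla_\ep\circ c_2^{-1})$, which by Proposition \ref{funcgH_0-ac} is already of the form $dx+d(\widehat\chi\cdot(\text{holomorphic and flat}))$, and pulling it back by $H_\ep^{-1}$ again gives the same shape. Hence on all of $H_\ep(\Mm_\ep)$ one has $\omega_{1,\ep}-dx=d(\widehat\chi\,G_\ep)$ with $G_\ep$ holomorphic and $O(|x|^{\frac N2(1-\delta)+1}|y|^{\frac N2(1-\delta)})$. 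Expanding $d(\widehat\chi G_\ep)=G_\ep\,d\widehat\chi+\widehat\chi\,dG_\ep$ and using that an $x$-derivative (resp. $y$-derivative) of $\widehat\chi$ costs precisely one power $|x|^{-1}$ (resp. $|y|^{-1}$), one sees that each coefficient of $\omega_{1,\ep}-dx$ is flat in $x$ and in $y$ of order comparable to $\tfrac N2(1-\delta)$, up to a unit shift coming from the exterior derivative. Matching these orders with the prescribed orders $\alpha$ in $x$ and $\beta$ in $y$, and keeping the estimate uniform over $|\ep|<\delta$, yields exactly the requirement $N>\max\{2(\alpha-1)/(1-\delta),\,2\beta/(1-\delta)\}$ of \eqref{N-beta}. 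Under this condition I would then set $\omega_{1,\ep}=dx$ on $\{x=0\}$ and check, by differentiating the coefficients of $\omega_{1,\ep}-dx$ up to order $\alpha$ (each such derivative being $O(|x|^{\alpha-j})$ times a bounded factor, hence $\to0$ as $x\to0$ for $j<\alpha$), that the extension is of class $C^\alpha$ across $\{x=0\}$; the clause $N>2\beta/(1-\delta)$ is what keeps this valid as $\{x=0\}$ is approached down to its intersection with $\{y=0\}$, where the $y$-derivatives of $\widehat\chi$ come into play. The form $\omega_{2,\ep}$ and its extension as $dy$ along $\{y=0\}$ are obtained by the symmetric argument, using the other estimate of Proposition \ref{funcgH_0-ac} and the representatives $dy$ and $d(\Delta_\ep\circ c_1^{-1})$.

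I expect the main obstacle to be precisely this bookkeeping of orders of flatness under the composition of the three operations that intervene — inversion of $H_\ep$, exterior differentiation, and multiplication by the non-holomorphic cutoff $\widehat\chi(\arg(x/y))$ — done uniformly in $\ep$. The cutoff is the delicate ingredient, since it is where non-holomorphicity enters and where the factors $|x|^{-1}$, $|y|^{-1}$ threaten to consume the flatness; one must use its special structure (in the blow-up chart $\widehat\chi=\chi(\arg\widetilde X)$ is independent of $y$ and each $\widetilde X$-derivative carries exactly one negative power of $\widetilde X$) so that the loss is at most one order of flatness per derivative, making $N$ large enough to buy an extension of any prescribed order $\alpha$.
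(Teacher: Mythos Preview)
Your proposal is correct and follows essentially the same approach as the paper. The paper's proof is more compressed: rather than explicitly inverting $H_\ep$ via a Neumann series and treating the two sectorial representatives separately, it simply observes that by \eqref{alcomplexstructure} it suffices to estimate the derivatives of $\widetilde H_\ep-\mathrm{id}$, writes down the bound $\bigl|\partial^{i+j}\widehat\chi\bigr|\le C\,\MB_{i+j}\,|x|^{-i}|y|^{-j}$ for the cutoff, and combines it with Proposition~\ref{funcgH_0-ac} to obtain directly
\[
\Bigl|\tfrac{\partial^{\alpha+\beta}(\widehat\chi\cdot f)}{\partial x^p\partial\overline x^q\partial y^r\partial\overline y^s}\Bigr|\le L\,|x|^{\frac N2(1+\ep_2)+1-\alpha}\,|y|^{\frac N2(1-\ep_2)-\beta},
\]
from which \eqref{N-beta} follows; the case of $\omega_{2,\ep}$ is handled by duality. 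Your explicit treatment of the inversion and of the transition between the sectorial representatives is a legitimate unpacking of the paper's shortcut (justified because $H_\ep$ is tangent to the identity with flat remainder), and your identification of the cutoff's derivative cost as the key bookkeeping point is exactly what drives both arguments.
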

\begin{proof}
By \eqref{alcomplexstructure}, it suffices to study the difference $$\widetilde{H}_{\ep}(x,y)-(x,y)=(\widehat{\chi}(x,y)\{\nabla_{\ep}\circ c_2^{-1}(x,y)-x\},\widehat{\chi}(x,y)\{\Delta_{\ep}\circ c_1^{-1}(x,y)-y\}).$$   The definition of $\xi$ yields
\begin{equation}
\begin{array}{lll}
\left|\frac{\partial^{i+j}\widehat{\chi}}{\partial x^p\partial\overline x^q\partial y^r\partial\overline y^s}\right|<C^{\underline{st}}\cdot\frac{\MB_{i+j}}{|x|^i|y|^j}
\end{array}\label{est-X}
\end{equation} 
for all $i=p+q \in\NN,$ $j=r+s \in\NN$ and $\MB_{i+j}:= 
\max_{\substack{
		0\leq k\leq i+j\\
		\theta\in I}} 
|\chi^{(k)}(\theta)|$ with $I=[-\pi /4,2\pi+\pi /4].$    To lighten the notation, put $f(x,y)=\nabla_{\ep}\circ c_2^{-1}(x,y)-x.$  Proposition \ref{funcgH_0-ac} implies that for all $k,l\in\NN,$ there exists a real constant $L=L(N,\alpha,\beta)>0$ such that
\begin{equation}
\begin{array}{lll}
\left|\frac{\partial^{\alpha+\beta}(\widehat{\chi}\cdot f)}{\partial x^p\partial\overline{x}^q\partial y^r\partial\overline{y}^s}\right| \leq L\cdot |x|^{\frac{N}{2}(1+\ep_2)+1-\alpha} \cdot |y|^{\frac{N}{2}(1-\ep_2)-\beta},
\end{array}\label{est-prod}
\end{equation}
for $\alpha=p+q$ and $\beta=r+s.$   Hence, if $|\ep|<\delta<<1$ and the order $N$ of  $g_{\ep}$ satisfies \eqref{N-beta} then the left hand side of \eqref{est-prod} tends to zero uniformly in $|x|<1,$ and thus $\omega_{1,\ep}$ and $dx$ coincide until the order $\alpha$ along the $x$-axis.    The assertion for the difference $\omega_{2,\ep}-dy$ follows by duality.
\end{proof}
The set $H_{\ep}(\Mm_{\ep})\subset\CC^2$ does not contain the axes of coordinates: its closure is $C^{\infty}$-diffeomorphic to a closed neighborhood of the origin of $\CC^2.$   Lemma \ref{estimateforms} shows that the almost complex structure generated by \eqref{alcomplexstructure} on $H_{\ep}(\Mm_{\ep})$ can be extended as $\omega_{1,\ep}=dx$ along the $x$-axis, and as $\omega_{2,\ep}=dy$ along the $y$-axis, until a well-defined order.    This almost complex structure is integrable.  Indeed, $\omega_{1,\ep}$ is obtained from the pullback of $\zeta_{1,\ep}$ and since the forms $d(\nabla_{\ep}\circ c_2^{-1})$ and $d(\Delta_{\ep}\circ c_1^{-1})$ are holomorphic on their domains and $\widehat{\chi}$ is of class $C^{\infty},$ $d\widetilde{\zeta}_{1,\ep}$ contains no forms of type $(0,2).$   By symmetry, the same holds for $d\widetilde{\zeta}_{2,\ep}.$  If $L^{1,0}$ is the span of the forms $\omega_{1,\ep},\omega_{2,\ep},$ then this integrability condition holds for $L^{1,0}$ on the surface $H_{\ep}(\Mm_{\ep}),$ and by continuity it remains valid after extension until the axes.   Hence, for each $\ep\in V$ the Newlander-Nirenberg Theorem ensures the existence of a smooth chart $\widetilde{\Lambda}_{\ep}=\widetilde{\Lambda}_{\ep}(z,w),$
\begin{equation}
\begin{array}{lll}
\widetilde{\Lambda}_{\ep}=(\widetilde{\xi}_{\ep}^1,\widetilde{\xi}_{\ep}^2):\widetilde H_{\ep}(\widetilde{\Mm})\to\CC^2,
\end{array}\label{csmocht}
\end{equation} 
which is holomorphic in the sense of the almost complex structure \eqref{diforms}.   It induces, in turn, the germ of a family of smooth charts 
\begin{equation}
\begin{array}{lll}
\Lambda_{\ep}=(\xi_{\ep}^1,\xi_{\ep}^2):\BB(r)\subset H_{\ep}(\Mm_{\ep})\to\CC^2
\end{array}\label{jodfellout}
\end{equation} 
in the quotient, where $\BB(r)$ is a small ball around the origin.    This chart is, by definition, holomorphic in the sense of the extended almost complex structure \eqref{alcomplexstructure}.

\begin{theorem}\label{NNext}
The germ of smooth charts $\Lambda_{\ep}$ respects the real foliation, is tangent to the identity at the origin, and depends analytically on the parameter.
\end{theorem}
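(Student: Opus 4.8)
The plan is to verify the three assertions in turn, the compatibility with the real structure being where the real work lies. Throughout, write $\rho$ for the anti-holomorphic involution $(x,y)\mapsto(\overline y,\overline x)$ of $\CC^2$, whose fixed-point set is the real plane $\{x=\overline y\}$; under the standard blow-up $\rho$ lifts to an involution of $\widetilde{\Mm}$ which exchanges the two charts and, in $(\widetilde X,y)$ coordinates, preserves $\arg\widetilde X$.

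For the analytic dependence on $\ep$, rather than invoking the Newlander--Nirenberg Theorem separately for each $\ep\in V$, I would apply it once on the total space of the family. Since $H_\ep$, and hence the $1$-forms $\omega_{1,\ep},\omega_{2,\ep}$ of \eqref{alcomplexstructure}, depend holomorphically on $\ep$, the distribution on $\bigcup_{\ep\in V}\{\ep\}\times H_\ep(\Mm_\ep)\subset\CC^3$ whose $(1,0)$-part is spanned by $d\ep,\omega_{1,\ep},\omega_{2,\ep}$ is a smooth almost complex structure; it is integrable because $d(d\ep)=0$, because $d\omega_{i,\ep}$ has no $(0,2)$-component (the fibrewise integrability already established, together with Lemma \ref{estimateforms}), and because wedging with the $(1,0)$-form $d\ep$ creates no $(0,2)$-component. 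Newlander--Nirenberg then yields a single smooth chart, holomorphic for this structure; as $\ep$ is itself a holomorphic coordinate for it, the chart may be taken in the form $(\ep,\xi^1_\ep,\xi^2_\ep)$ with $\xi^j_\ep$ annihilated by $\partial_{\overline\ep}$ --- that is, holomorphic in $\ep$ --- and $J_\ep$-holomorphic in $(z,w)$ for each fixed $\ep$. Passing to the quotient by $\Gamma_\ep$ produces $\Lambda_\ep$, now manifestly analytic in the parameter.

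For the tangency at the origin, recall that by Lemma \ref{estimateforms}, with $N$ in \eqref{eq.decomp} taken large, $\omega_{1,\ep}$ coincides with $dx$ along the $x$-axis and $\omega_{2,\ep}$ with $dy$ along the $y$-axis; in particular both agree with $dx,dy$ at the origin, so $d\xi^1_\ep(0),d\xi^2_\ep(0)$ are $\CC$-linear combinations of $dx,dy$. Composing $\Lambda_\ep$ with the translation sending $\Lambda_\ep(0)$ to $0$ and with the (holomorphic, $\ep$-analytic) inverse of $D\Lambda_\ep(0)$, we may assume $\Lambda_\ep(0)=0$ and $D\Lambda_\ep(0)=\mathrm{id}$; letting $N$ grow, the same estimate shows that $\Lambda_\ep$ may in fact be taken tangent to the identity to arbitrarily high order along the axes, which is the infinitely tangent regime alluded to in the introduction.

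The remaining and principal point is that, for $\ep\in\RR$, $\Lambda_\ep$ maps the real plane to the real plane. First one checks that every ingredient of the surgery is $\rho$-equivariant: the normal form \eqref{formclawf} satisfies the reality relation \eqref{realchar} (so $\rho$ preserves $\Ff_{v_0^1}$ and interchanges the holonomies $h_\ep^\pm$ of the $(X,y)$-chart with the holonomies $\ell_\ep^\pm$ of the $(x,Y)$-chart); the families $g_\ep,\widehat g_\ep$ of \eqref{eq.decomp} are real by Lemma \ref{fnor-Q}, hence the sealing components $\Delta_\ep$ and $\nabla_\ep$ are exchanged by $\rho$; and the cutoff $\widehat\chi(x,y)=\chi(\arg(x/y))$ is real-valued and $\rho$-invariant because $\arg(x/y)$ is. Therefore $\widetilde H_\ep$ of \eqref{H_0--} satisfies $\rho\circ\widetilde H_\ep=\widetilde H_\ep\circ\rho$, and the extended almost complex structure \eqref{alcomplexstructure} is $\rho$-anti-invariant, i.e.\ $\rho$ is anti-holomorphic for it. Consequently $\rho':=\Lambda_\ep\circ\rho\circ\Lambda_\ep^{-1}$ is a genuine anti-holomorphic --- hence real-analytic --- involution of a neighbourhood of $0$ in $\CC^2$, whose linear part, by the previous paragraph, is that of the standard involution $\sigma_0:(z,w)\mapsto(\overline w,\overline z)$. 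Its fixed-point set is a real-analytic totally real surface tangent at $0$ to $\{z=\overline w\}$; straightening that surface to $\{z=\overline w\}$ by a holomorphic germ $\Phi$ tangent to the identity (and analytic in $\ep$, extending from real $\ep$ by analytic continuation), one gets that $\Phi\circ\rho'\circ\Phi^{-1}$ is anti-holomorphic and fixes $\{z=\overline w\}$ pointwise, hence equals $\sigma_0$ by the identity theorem for (anti-)holomorphic maps, which are determined by their restriction to a maximally totally real submanifold. Replacing $\Lambda_\ep$ by $\Phi\circ\Lambda_\ep$ --- still $J_\ep$-holomorphic, tangent to the identity, and analytic in $\ep$ --- yields $\Lambda_\ep\circ\rho=\sigma_0\circ\Lambda_\ep$, so $\Lambda_\ep$ carries $\mathrm{Fix}(\rho)=\{x=\overline y\}$ onto $\{z=\overline w\}$, i.e.\ it respects the real foliation. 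I expect this last step --- the bookkeeping that $\rho$-equivariance survives every stage of the construction, and the simultaneous reconciliation of reality, analyticity in $\ep$ and tangency to the identity --- to be the main obstacle; by contrast the analytic dependence and the tangency are essentially routine.
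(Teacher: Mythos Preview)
Your argument is essentially correct, but the route is genuinely different from the paper's, and the trade-offs are worth noting.

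The paper does not invoke an abstract Newlander--Nirenberg theorem and then post-correct. Instead it works with the explicit Nijenhuis--Woolf construction of the integrating chart: the coordinates $(z,w)$ are obtained as the unique fixed point of the integral system
\[
z^k(\xi^1_\ep,\xi^2_\ep)=\xi^k_\ep+\mathbf{TF}^k[z,w](\xi^1_\ep,\xi^2_\ep)-\mathbf{TF}^k[z,w](0,0),
\]
built from the operators $T^1,T^2$ (see \eqref{intOperators}, \eqref{eqsnon-linear}). The reality then falls out of a symmetry of these operators: because the structure coefficients satisfy $\overline{e^k_{j,\overline\ep}(\overline y,\overline x)}=e^j_{k,\ep}(x,y)$, one has $\mathbf{TF}^1[z,w](\xi^1,\xi^2)=\overline{\mathbf{TF}^2[\overline w,\overline z](\overline{\xi^2},\overline{\xi^1})}$ for real $\ep$, and uniqueness of the fixed point forces $z=\overline w$ whenever $\xi^1_\ep=\overline{\xi^2_\ep}$. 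Tangency to the identity is read off directly from the shape of the integral equation. No post-composition by a straightening map $\Phi$ is needed, and hence no issue of reconciling such a $\Phi$ with the analytic $\ep$-dependence ever arises.

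Your approach --- parametric Newlander--Nirenberg on the $(\ep,z,w)$ total space, followed by a normalisation for tangency and a straightening of $\mathrm{Fix}(\rho')$ --- is more conceptual and avoids opening the Nijenhuis--Woolf black box. Its cost is the extra bookkeeping you yourself flag: one must check that the straightening $\Phi_\ep$, defined a priori only for real $\ep$, depends real-analytically on $\ep$ so that its analytic continuation to complex $\ep$ remains holomorphic in $(z,w)$ and composable with $\Lambda_\ep$. This is true (the fixed locus of $\rho'_\ep$ varies real-analytically with real $\ep$ by the implicit function theorem, and any local biholomorphic flattening of a real-analytic family of totally real surfaces can be chosen real-analytically in the parameter), but it is a step the paper's route simply does not need. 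Conversely, your parametric Newlander--Nirenberg argument is more explicit than the paper on the analytic dependence in $\ep$, which the paper asserts but does not spell out.
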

\begin{proof}
In order to show that the chart respects the real foliation, it suffices to prove that $\widetilde{\Lambda}_{\ep}$ is real, namely, it sends $\{z=\overline w\}\simeq\RR^2$ into $\RR^2\subset\CC^2$ when $\ep\in\RR.$

The family of diffeomorphisms \eqref{H_0--} is analytic with respect to the structure \eqref{diforms}.    It follows that, modulo a linear combination,  
\begin{equation*}
\begin{array}{lll}
dz &=& dx + e_{1,\ep}^1d\overline{x} + e_{2,\ep}^1d\overline{y}\\
dw &=& dy + e_{1,\ep}^2d\overline{x} + e_{2,\ep}^2d\overline{y}, 
\end{array}
\end{equation*} 
where the coefficients are computed in terms of $\chi,\Delta,\nabla$ and its derivatives, and satisfy:
\begin{equation}
\begin{array}{lll}
\overline{e_{j,\overline{\ep}}^k(\overline y,\overline x)} &=& e_{k,\ep}^j(x,y),\quad j,k\in\{1,2\}.
\end{array}\label{funcSr-}
\end{equation}
This is because $\RR^2$ is itself invariant under \eqref{H_0--}: $H_{\ep}(\{x=\overline y\})\subset\RR^2$ when the parameter is real.   By Proposition \ref{funcgH_0-ac}, $e_{j,\ep}^k=\frac{o(1)}{1+o(1)},$ yielding:
\begin{equation}
\begin{array}{lll}
e_{j,\ep}^k(0,0)=0,\quad j,k\in\{1,2\}.
\end{array}\label{eij=0}
\end{equation}
Suppose that the image $\widetilde H_{\ep}(\Mm)$ contains a small bidisk $\DD_s\times\DD_s,$ and write $G_{\ep}:=\widetilde H_{\ep}^{-1}.$    Consider the pullback $\aG_{j,\ep}^k=G_{\ep}^*(e_{j,\ep}^k):\DD_s\times\DD_s\to\CC^2$ given by
\begin{equation*}
\begin{array}{lll}
\aG_{j,\ep}^k(z,w)=G_{\ep}^*(e_{j,\ep}^k)(z,w)\equiv e_{j,\ep}^k(G_{\ep}(z,w)),\quad j,k=1,2, \ \ \ep\in V,
\end{array}
\end{equation*} 
for $(z,w)\in\DD_s\times\DD_s.$    By \eqref{funcSr-} the collection $\aG_{j,\ep}^k$ satisfies again:
\begin{equation}
\begin{array}{lll}
\overline{\aG_{j,\overline{\ep}}^k(\overline{w},\overline{z})} &=& \aG_{k,\ep}^j(z,w),
\end{array}\label{funcSr}
\end{equation}
and by \eqref{eij=0}, $\aG_{j,\ep}^k(0,0)=0.$\\

\noindent{\emph{Notation.}}   We will write $z^1=z, \ \ z^2=w,$ and:
\begin{equation*}
\begin{array}{lll}
\partial_j=\frac{\partial}{\partial z^j},\quad \overline{\partial}_j=\frac{\partial}{\partial\overline{z^j}},\quad j=1,2.
\end{array}
\end{equation*}
A complex valued function $\xi$ such that:
\begin{equation}
\begin{array}{lll}
\overline{\partial}_j\xi-(\aG_j^1\partial_1\xi+\aG_j^2\partial_2\xi)=0,\quad j=1,2
\end{array}\label{compOpee}
\end{equation} 
is called (cf. \cite{NNir}) holomorphic with respect to the given almost complex structure.    Instead of considering the new coordinates \eqref{csmocht} as solutions to \eqref{compOpee} and functions of $(z,w)$ and their complex conjugates, the coordinates $(z,w)$ are supposed to be functions of \eqref{csmocht} and their complex conjugates.    Inasmuch as it suffices to study only the real character of the chart $\widetilde{\Lambda}_{\ep},$ the tildes on the chart $(\widetilde{\xi}_{\ep}^1,\widetilde{\xi}_{\ep}^2)$ are dropped from now on.\\

\noindent{\emph{Notation.}}   The holomorphic and antiholomorphic dual differentials are:
\begin{equation}
\begin{array}{lll}
d_{j,\ep}=\frac{\partial}{\partial\xi_{\ep}^j},\quad\overline{d}_{j,\ep}=\frac{\partial}{\partial\overline{\xi_{\ep}^j}},\quad j=1,2.
\end{array}\label{boundOpe}
\end{equation}
\noindent It is known (cf. \cite{NiWoo}, pp. 445) that for every $\ep\in V,$ the map $G_{\ep}$ from $\DD_s\times\DD_s\subset\CC^2$ to the almost complex manifold $\widetilde{\Mm}$ is holomorphic if and only if its coordinates $(z,w)=G_{\ep}^*(x,y)$ satisfy the differential equations
\begin{equation}
\begin{array}{lll}
\overline{d}_{j,\ep} z^k+\aG_{m,\ep}^k\overline{d}_{j,\ep}\overline{z}^m=0,\quad j,k=1,2.
\end{array}\label{non-linear}
\end{equation}
In such a case, \eqref{compOpee} yields:
\begin{equation}
\begin{array}{lll}
\overline{d}_{j,\ep}\xi_{\ep}^{p} &=& \partial_k\xi_{\ep}^{p} \overline d_{j,\ep}z^k+\overline{\partial}_k\xi_{\ep}^{p}\overline{d}_{j,\ep}\overline{z}^k\\
&=& \partial_k\xi_{\ep}^{p}\{\overline{d}_{j,\ep} z^k+\aG_{m,\ep}^k\overline{d}_{j,\ep}\overline{z}^m\}\\
&=& 0
\end{array}\label{eqtodvar}
\end{equation}
for $j=1,2.$    Notice that the replacement of \eqref{non-linear} in the term after the first equality of \eqref{eqtodvar}, yields: 
\begin{equation*}
\begin{array}{lll}
\overline{d}_{j,\ep}\xi_{\ep}^{p}=\overline{d}_{j,\ep}\overline{z}^k\{\overline{\partial}_k\xi_{\ep}^{p}-\aG_{k,\ep}^i\partial_i\xi_{\ep}^{p}\},\quad p=1,2.
\end{array}
\end{equation*} 
Thus the parametric Cauchy-Riemann equations $\overline{d}_{j,\ep}\xi_{\ep}^{p}=0$ are equivalent to the system \eqref{compOpee} if $z,w$ satisfy \eqref{non-linear} with the matrix $[\overline{d}_{j,\ep}\overline{z}^k]$ non-singular for all $\ep$ in the symmetric neighborhood $V.$    We find real solutions to \eqref{non-linear}.\\

Denote by $T^1,T^2$ the integral operators
\begin{equation}
\begin{array}{lll}
T^1f(z,w)=\frac{1}{2i\pi}\iint_{|\tau|<\rho} \frac{f(\tau,w)}{z-\tau}d\overline{\tau}d\tau,\\
T^2f(z,w)=\frac{1}{2i\pi}\iint_{|\tau|<\rho} \frac{f(z,\tau)}{w-\tau}d\overline{\tau}d\tau,\\
\end{array}\label{intOperators}
\end{equation} 
with $\rho>0$ fixed and $f=f(z,w)$ has suitable differentiability properties and, eventually, depends on additional complex coordinates.    A short calculation shows that if $f_1,f_2$ are as above and $f_1(z,w)=\overline{f_2(\overline{w},\overline{z})},$ then 
\begin{equation}
\begin{array}{lll}
T^1f_1(z,w)=\overline{T^2f_2(\overline{w},\overline{z})}.
\end{array}\label{navsupp}
\end{equation}
The non-linear differential system corresponding to \eqref{non-linear} is given by the integral equation (cf. \cite{NiWoo}):
\begin{equation}
\begin{array}{lll}
z^k(\xi_{\ep}^1,\xi_{\ep}^2)&=&\xi_{\ep}^k+\mathbf{TF}^k[z,w](\xi_{\ep}^1,\xi_{\ep}^2)-\mathbf{TF}^k[z,w](0,0),\ \ k=1,2
\end{array}\label{eqsnon-linear}
\end{equation} 
where
\begin{equation*}
\begin{array}{lll}
\mathbf{TF}^k:=T^1f_{1k}+T^2f_{2k}-\frac{1}{2}\left\{T^1\overline{d}_{1,\ep} T^2 f_{2k} + T^2\overline{d}_{2,\ep} T^1 f_{1k}\right\},\quad k=1,2\\
\end{array}
\end{equation*}
are the Nijenhuis-Woolf operators, and
\begin{equation*}
\begin{array}{lll}
f_{jk}(z,w)(\xi^1,\xi^2)(\ep)= -(\aG_{1,\ep}^k(z,w)\overline{d}_{j,\ep}\overline{z}+\aG_{2,\ep}^k(z,w)\overline{d}_{j,\ep}\overline{w}),\ \ i,j\in\{1,2\}.
\end{array}
\end{equation*}
\begin{corollary}\label{FTsymmetry}
For every $(z,w)$ in a neighborhood of the origin and for every initial value $(\xi_{\ep}^1,\xi_{\ep}^2),$ the Nijenhuis-Woolf operators are related through: 
\begin{equation}
\begin{array}{lll}
\mathbf{TF}^1[z,w](\xi^1,\xi^2)(\ep)=\overline{\mathbf{TF}^2[\overline{w},\overline{z}](\overline{\xi^2},\overline{\xi^1})(\ep)}
\end{array}\label{tehaspint}
\end{equation}
when the parameter is real.
\end{corollary}
\begin{proof}
This is plain consequence of \eqref{funcSr}, the definition of the dual differentials \eqref{boundOpe} and property \eqref{navsupp} on the maps $f_{jk}.$
\end{proof}
The pair of coordinates $(\xi_{\ep}^1,\xi_{\ep}^2)$ is referred to as the initial value of \eqref{eqsnon-linear}.   For $\ep=0,$ the system \eqref{eqsnon-linear} is solved by means of a Picard iteration process (fixed point Theorem) which converges in a small ball $\BB(r_0)$ of radius $r_0>0$ around the origin of $(z,w)$ coordinates (cf. \cite{NNir},\cite{NiWoo}).    It turns out that for all $|\ep|$ small and fixed, any solution to \eqref{eqsnon-linear} is well defined on $\BB(r),$ with $r=r_0/2.$    Moreover, if $r$ is small enough, then the solution $(z,w)$ is unique:
\begin{lemma}\cite{NNir}\label{2nuni}	
For $r$ sufficiently small, and $\ep\in V$ fixed, the integral system \eqref{eqsnon-linear} admits a unique solution $(z,w)$ satisfying also \eqref{non-linear} and such that the parametric transformation $\widetilde{\Lambda}_{\ep}^{\circ -1}$ from the $(\xi_{\ep}^1,\xi_{\ep}^2)$ coordinates to $(z,w)$ coordinates has non-vanishing Jacobian.
\end{lemma}

\begin{proposition}\label{lambrecha}
The chart \eqref{csmocht} respects the real foliation and is tangent to the identity. 
\end{proposition}
\begin{proof}
Let $(\xi_{\ep}^1,\xi_{\ep}^2)$ be the initial value and $(z,w)$ be the solution to \eqref{eqsnon-linear}.    If the initial condition satisfies $\xi_{\ep}^1=\overline{\xi_{\ep}^2},$ then Corollary \ref{FTsymmetry} leads to
\begin{equation*}
\begin{array}{lll}
\overline{z^k} &=& \xi_{\ep}^k+\mathbf{TF}^k[\overline{w},\overline{z}](\xi_{\ep}^1,\xi_{\ep}^2)-\mathbf{TF}^k[\overline{w},\overline{z}](0,0),\ \ k=1,2
\end{array}\label{eqsnon-linear11}
\end{equation*} 
and the unicity of the solution carries $z=\overline{w}.$    Since $\widetilde{\Lambda}_{\ep}$ has non-vanishing Jacobian, it is a local isomorphism if $r>0$ is small.    
By \eqref{eqsnon-linear}, the chart $\widetilde{\Lambda}_{\ep}$ is tangent to the identity at the origin.
\end{proof}
Inasmuch as $\Gamma_{\ep},H_{\ep}$ and \eqref{csmocht} respect the real foliation, the chart $\Lambda_{\ep}$ is real when $\ep$ is real, and is clearly tangent to the identity at the origin.    This concludes the proof of Theorem \ref{NNext}.
\end{proof}

\noindent{\emph{End of the proof of Theorem \ref{desusa}.}}    The composition $\vartheta_{\ep}=\Lambda_{\ep}\circ H_{\ep}:H_{\ep}^{-1}(\BB(r))\to\CC^2$ between complex analytic manifolds is honestly biholomorphic.    The closure $\Ww:=\overline{\vartheta_{\ep}(H_{\ep}^{-1}(\BB(r)))}$ contains the origin in its interior.    It remains to check that the family of vector fields defined on $\Ww$ by the pushforward $$\vv_{\ep}=(\vartheta_{\ep})_* v_{\ep}$$ is orbitally equivalent to a generic family unfolding a weak focus with formal normal form \eqref{formclawf}.    

\begin{proposition}
The quotient of the eigenvalues of $\vv_{\ep}$ is equal to $\frac{\ep+ i}{\ep-i}.$
\end{proposition}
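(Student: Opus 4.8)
The plan is to compute the eigenvalue quotient of $\vv_{\ep}$ at the origin by blowing up, where near the exceptional divisor $\vv_{\ep}$ reduces to the formal normal form \eqref{formclawf}, and to use that this quotient is a biholomorphic invariant of a non-degenerate singular point obeying the elementary transformation rule under one monoidal blow-up. The first step is to extend $\vartheta_{\ep}=\Lambda_{\ep}\circ H_{\ep}$, a priori defined only on $H_{\ep}^{-1}(\BB(r))$, which omits the two coordinate axes, to a germ of biholomorphism of $(\CC^{2},0)$. Along $\{x=0\}$ and $\{y=0\}$ the corrections $\Delta_{\ep}\circ c_{1}^{-1}-y$ and $\nabla_{\ep}\circ c_{2}^{-1}-x$ vanish to arbitrarily high order by Proposition \ref{funcgH_0-ac}, so $\widetilde{H}_{\ep}$ agrees with the identity to high order along the axes and, by Lemma \ref{estimateforms}, the almost complex structure \eqref{alcomplexstructure} extends along them as the standard one; hence $\Lambda_{\ep}$ is holomorphic in the ordinary sense near the axes and extends across them by Riemann's removable singularity theorem, and then across the single point $0$ by Hartogs's theorem. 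Since $H_{\ep}$ and $\Lambda_{\ep}$ are tangent to the identity at the origin (Corollary \ref{gtangid} and Theorem \ref{NNext}), the extension $\vartheta_{\ep}$ is a germ of biholomorphism of $(\CC^{2},0)$ tangent to the identity, conjugating $v_{\ep}$ (Hartogs-extended across the axes and $0$) to $\vv_{\ep}$.

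Now blow up the origin on both sides. A biholomorphism of $(\CC^{2},0)$ tangent to the identity lifts to a biholomorphism of the two blow-ups which fixes the exceptional divisor pointwise, hence preserves the eigenvalue quotient at every singular point lying on the divisor. On the $v_{\ep}$ side, near the singular points $p_{1}$ (at $X=0$ in the $(X,y)$ chart) and $p_{2}$ (at $Y=0$ in the $(x,Y)$ chart) the sealing is ineffective: by Proposition \ref{funcgH_0-ac} the corresponding corrections are flat there, so the gluing $\Upsilon_{\ep}$ — and with it the complex structure of $\Mm_{\ep}$ — degenerates to the identity as $X\to0$ (resp. $Y\to0$), and near $p_{1},p_{2}$ the foliation is just the pullback of \eqref{formclawf}, with linear parts $2iX\frac{\partial}{\partial X}+(\ep-i)y\frac{\partial}{\partial y}$ and $-2iY\frac{\partial}{\partial Y}+(\ep+i)x\frac{\partial}{\partial x}$ (the sign $\pm$ in $(\ep\pm u)$ affecting only cubic terms). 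Matching $2iX\frac{\partial}{\partial X}+(\ep-i)y\frac{\partial}{\partial y}$ with the general blow-up $(\mu_{1}-\mu_{2})X\frac{\partial}{\partial X}+\mu_{2}y\frac{\partial}{\partial y}$ of a linear field $\mu_{1}x\frac{\partial}{\partial x}+\mu_{2}y\frac{\partial}{\partial y}$ gives $\mu_{1}=\ep+i$, $\mu_{2}=\ep-i$; the eigenvalue quotients at $p_{1},p_{2}$ are then $\lambda(\ep)=(\ep-i)/2i=\mu_{2}/(\mu_{1}-\mu_{2})$ and $\lambda'(\ep)=-(\ep+i)/2i$, the numbers recorded after \eqref{bdown}, and $0$ is a non-degenerate singular point of $\vv_{\ep}$ with
\[
\frac{\mu_{1}}{\mu_{2}}\;=\;1+\frac{1}{\lambda(\ep)}\;=\;1+\frac{2i}{\ep-i}\;=\;\frac{\ep+i}{\ep-i};
\]
the quantity read off $p_{2}$, namely $1+1/\lambda'(\ep)=(\ep-i)/(\ep+i)$, is its reciprocal, as it must be.

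The step I expect to be the main obstacle is the first one: making it precise that the infinite tangency of the sealing to the identity forces $\widetilde{H}_{\ep}$ to be the identity to sufficiently high order along both axes and the straightened almost complex structure to be the standard one there, so that $\vartheta_{\ep}$ genuinely extends to a biholomorphic germ at $0$ commuting with the monoidal map; granting that, the remainder is the arithmetic above. One can also bypass the blow-up of $\vv_{\ep}$ altogether: by Proposition \ref{monod-vep} the monodromy of $\vv_{\ep}$ on $\Sigma$ is $\Qq_{\ep}$, which has the same multiplier at $0$ as its formal normal form $\Qq_{0,\ep}=\Ll_{-1}\circ\tau_{\ep}^{\pi}$, namely $-e^{\pi\ep}=e^{2\pi i\lambda(\ep)}$ with $\lambda(\ep)=(\ep-i)/2i$ the branch pinned down by $\lambda(0)=-\tfrac12$; since this multiplier is the linearization of the holonomy of the divisor at a singular point of eigenvalue quotient $\lambda(\ep)$, the same rule $\mu_{1}/\mu_{2}=1+1/\lambda(\ep)$ returns $(\ep+i)/(\ep-i)$.
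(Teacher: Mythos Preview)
Your route is genuinely different from the paper's. The paper never tries to extend $\vartheta_{\ep}$ to a germ of biholomorphism at $0$. Instead it first invokes Theorem~\ref{NNext} to conclude that $\vv_{\ep}$ is \emph{real} (satisfies~\eqref{realchar}), hence has complex-conjugate eigenvalues $\tau=a+ib,\ \overline{\tau}=a-ib$. It then computes the monodromy multiplier $\exp\!\big(2\pi i(\tau+\overline{\tau})/(\tau-\overline{\tau})\big)=\exp(2\pi a/b)$, matches it with the known $\exp(2\pi\ep)$ coming from~\eqref{pforPpp}, and obtains $a/b=\ep-im$; reality of $a,b$ for real $\ep$ forces $m=0$. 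So the integer ambiguity is killed by reality, not by any geometric identification with the model.

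In your first approach the step you yourself flag is indeed the weak point, and your sketch for it does not go through as written. The almost complex structure agrees with the standard one only \emph{on} the axes (Lemma~\ref{estimateforms} gives coincidence to finite order there), not on an open neighbourhood of them; so $\Lambda_{\ep}$ is not ``holomorphic in the ordinary sense near the axes'' and Riemann's theorem cannot be applied to $\Lambda_{\ep}$ or to $\vartheta_{\ep}$ separately. More fundamentally, $\Mm_{\ep}$ is an abstract manifold that contains no point playing the r\^ole of $0$, so ``$v_{\ep}$ Hartogs-extended across the axes and $0$'' and ``$\vartheta_{\ep}$ extends to a germ at $0$'' have no direct meaning. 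What \emph{does} make sense, and would rescue your idea more cheaply than the blow-up argument, is to work purely $C^{\infty}$: both $H_{\ep}$ and $\Lambda_{\ep}$ extend as $C^{\infty}$ diffeomorphisms of a neighbourhood of $0$ tangent to the identity (Corollary~\ref{gtangid}, Proposition~\ref{lambrecha}), so the push-forward of the normal-form field by the $C^{\infty}$ map $\vartheta_{\ep}$ has the same $1$-jet at $0$ as~\eqref{formclawf}; since this push-forward is holomorphic away from the axes and bounded, its holomorphic extension to $0$ must have that same linear part. That line of reasoning is shorter than the paper's and avoids monodromy altogether, but it is not the argument you wrote.

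Your alternative via the semi-monodromy is essentially the paper's computation, but your way of selecting the branch ``pinned down by $\lambda(0)=-\tfrac12$'' is circular: knowing $\lambda(0)=-\tfrac12$ for $\vv_{0}$ \emph{is} the statement at $\ep=0$. The equation $e^{2\pi i\lambda(\ep)}=-e^{\pi\ep}$ determines $\lambda(\ep)$ only modulo $\ZZ$, and nothing in your argument excludes $\lambda(\ep)=(\ep-i)/2i+m$ with $m\neq0$. The paper's resolution is precisely the reality constraint $\mathrm{Re}\,\lambda(\ep)=-\tfrac12$ (equivalently $a/b\in\RR$), which forces $m=0$; you would need either that, or a correct version of your first approach, to close the loop.
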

\begin{proof}
By Theorem \ref{NNext}, the components $\PP_{\ep},\QQ_{\ep}$ of $$\vv_{\ep}(\x,\y)=\PP_{\ep}(\x,\y)\frac{\partial}{\partial \x} + \QQ_{\ep}(\x,\y)\frac{\partial}{\partial \y}$$ are related through \eqref{realchar} as well, and then the eigenvalues of the vector field $\vv_{\ep}$ are complex conjugate.    We call them $\tau(\ep),\overline{\tau(\ep)},$ with $\tau(\ep) = a(\ep) +ib(\ep)$ and $a(\ep),b(\ep)$ depend analytically on $\ep$ small and are real on $\ep\in\RR.$    In the $(\X,\y)$ chart of the blow-up, $\vv_{\ep}$ gives rise to a family of equations of the form:
\begin{equation*}
\begin{array}{lll}
\dot \X &=& (\tau-\overline{\tau})\X+...\\
\dot \y &=& \overline{\tau}\y+...
\end{array}
\end{equation*} 
with Poincar\'e map (cf. \cite{MM}) $\Pp_{\ep}(\y)=\exp\left(2i\pi\left(\frac{2\overline{\tau}}{\tau-\overline{\tau}}\right)\right)\y+...,$ while in $(\x,\Y)$ coordinates, $\vv_{\ep}$ gives rise to the system:
\begin{equation*}
\begin{array}{lll}
\dot \x &=& \tau \x+...\\
\dot \Y &=& (\overline{\tau}-\tau)\Y+...
\end{array}
\end{equation*} 
with Poincar\'e map $\Pp_{\ep}(\x)=\exp\left(-2i\pi\left(\frac{2\tau}{\overline{\tau}-\tau}\right)\right)\x+...$ (computed on the cross section $\x=\y).$   It is easily seen that: $$\mu(\ep)=\exp\left(2i\pi\left(\frac{2\overline{\tau}}{\tau-\overline{\tau}}\right)\right)=\exp\left(-2i\pi\left(\frac{2\tau}{\overline{\tau}-\tau}\right)\right)=\exp\left(2i\pi\left(\frac{\tau+\overline{\tau}}{\tau-\overline{\tau}}\right)\right),$$ where $\Pp_{\ep}'(0)=\mu(\ep).$    On the other hand, $\mu(\ep)=\exp(2\pi\ep)$ by the preparation \eqref{pforPpp}.    Thus, $$2\pi\ep=2\pi\frac{2a(\ep)}{2b(\ep)}+2i\pi m,$$ for some $m\in\NN.$    This means that 
\begin{equation}
\begin{array}{lll}
\frac{a(\ep)}{b(\ep)}=\ep-im.
\end{array}\label{m==0}
\end{equation} 
Inasmuch as $a(\ep),b(\ep)$ are real on $\ep\in\RR,$ the equation \eqref{m==0} implies that $m=0,$ and the conclusion follows.
\end{proof}

\section{Acknowledgements.}
I am grateful to Christiane Rousseau for suggesting the problem and supervising this work, and to Colin Christopher and Sergei Yakovenko for helpful discussions.   I am indebted to an unknown referee as well, for suggesting many important comments concerning the previous version of the paper.
\bibliography{mybib}
\end{document}